\newtheorem{theorem}{Theorem}
\newtheorem{lemma}{Lemma}
\newtheorem{assumption}{Assumption}
\newtheorem{example}{Example}
\numberwithin{theorem}{section}
\numberwithin{lemma}{section}
\numberwithin{proposition}{section}
\providecommand{\ie}		{\emph{i.e\@.}\xspace}
\providecommand{\eg}		{\emph{e.g\@.}\xspace}
\providecommand{\myurl}[1][]	{\texttt{web.eecs.umich.edu/$\sim$fessler#1}\xspace}
\providecommand{\onweb}[1]	{Available from \myurl.}
\long\def\comment#1{}
\providecommand{\bcent}		{\begin{center}}
\providecommand{\ecent}		{\end{center}}
\providecommand{\benum}		{\begin{enumerate}}
\providecommand{\eenum}		{\end{enumerate}}
\providecommand{\bitem}		{\begin{itemize}}
\providecommand{\eitem}		{\end{itemize}}
\providecommand{\bvers}		{\begin{verse}}
\providecommand{\evers}		{\end{verse}}
\providecommand{\btab}		{\begin{tabbing}}	
\providecommand{\etab}		{\end{tabbing}}
\newcounter{blist}
\providecommand{\blistmark}	{\makebox[0pt]{$\bullet$}}
\providecommand{\blistitemsep}	{0pt}
\providecommand{\blist}[1][]	{%
\begin{list}{\blistmark}{%
\usecounter{blist}%
\setlength{\itemsep}{\blistitemsep}%
\setlength{\parsep}{0pt}%
\setlength{\parskip}{0pt}%
\setlength{\partopsep}{0pt}%
\setlength{\topsep}{0pt}%
\setlength{\leftmargin}{1.2em}%
\setlength{\labelsep}{0.5\leftmargin}
\setlength{\labelwidth}{0em}%
#1}
}
\providecommand{\elist}		{\end{list}}
\providecommand{\blistitemsep}	{0pt}
\providecommand{\bjfenum}[1][]	{%
\begin{list}{\bcolor{\arabic{blist}.} }{%
\usecounter{blist}%
\setlength{\itemsep}{\blistitemsep}%
\setlength{\parsep}{0pt}%
\setlength{\parskip}{0pt}%
\setlength{\partopsep}{0pt}%
\setlength{\topsep}{0pt}%
\setlength{\leftmargin}{0.0em}%
\setlength{\labelsep}{1.0\leftmargin}
\setlength{\labelwidth}{0pt}%
#1}
}
\newcounter{blistAlph}
\providecommand{\blistAlph}[1][]
{\begin{list}{\makebox[0pt][l]{\Alph{blistAlph}.}}{%
\usecounter{blistAlph}%
\setlength{\itemsep}{0pt}\setlength{\parsep}{0pt}%
\setlength{\parskip}{0pt}\setlength{\partopsep}{0pt}%
\setlength{\topsep}{0pt}%
\setlength{\leftmargin}{1.2em}%
\setlength{\labelsep}{1.0\leftmargin}
\setlength{\labelwidth}{0.0\leftmargin}#1}%
}
\newcounter{blistRoman}
\providecommand{\blistRoman}[1][]
{\begin{list}{\Roman{blistRoman}.}{%
\usecounter{blistRoman}%
\setlength{\itemsep}{0.5em}\setlength{\parsep}{0pt}%
\setlength{\parskip}{0pt}\setlength{\partopsep}{0pt}%
\setlength{\topsep}{0pt}%
\setlength{\leftmargin}{4em}%
\setlength{\labelsep}{0.4\leftmargin}
\setlength{\labelwidth}{0.6\leftmargin}#1}%
}
\providecommand{\jfbbm}[1]	{\xmath{\mathbbm{#1}}} 
\providecommand{\qed}[1][0pt]	{\hfill\raisebox{#1}{\inmath{\Box}}} 
\providecommand{\reals}		{\jfbbm{R}}
\providecommand{\inprod}[2]	{\xmath{\mathop{\langle #1,\, #2 \rangle}\nolimits}}
\providecommand{\Inprod}[2]	{\xmath{\left\langle #1,\ #2 \right\rangle}}
\let\equivsave\equiv
\def\equiv{\xmath{\equivsave}}
\providecommand{\ba}[1]		{\left[ \begin{array}{#1}}
\providecommand{\ea}		{\end{array} \right]}
\providecommand{\be}		{\begin{equation}}
\providecommand{\ee}[1]		{\label{#1}\end{equation}}
\providecommand{\bea}		{\begin{eqnarray}}
\providecommand{\eea}[1]	{\label{#1}\end{eqnarray}}
\providecommand{\beas}		{\begin{eqnarray*}}
\providecommand{\eeas}		{\end{eqnarray*}}
\providecommand{\beals}[1][1]	{\begin{alignat*}{#1}}	
\providecommand{\eeals}		{\end{alignat*}}
\providecommand{\berr}[2]{
\bgroup
\renewcommand{\theequation}{#1}
\be
#2
\ee{e,#1}
\egroup
\ignorespaces
}
\providecommand{\bearr}[2]{
\bgroup
\renewcommand{\theequation}{#1}
\bea
#2
\eea{e,#1}
\egroup
\ignorespaces
}
\providecommand{\inmath}	{\ensuremath}
\providecommand{\xmath}[1]	{\inmath{#1}\xspace}
\providecommand{\bmath}[1]	{\xmath{\bm{#1}}}	
\definecolor{darkgreen}{rgb}{0.0, 0.4, 0.13}
\newcommand{\EE} {\mathbb{E}}
\newcommand{\Xs} {\mathcal{X}}
\newcommand{\Ys} {\mathcal{Y}}
\newcommand{\tF} {\tilde{\bmath{F}}}
\newcommand{\zero} {\bmath{0}}
\newcommand{\F} {\bmath{F}}
\newcommand{\I} {\bmath{I}}
\newcommand{\Z} {\bmath{Z}}
\newcommand{\R} {\bmath{R}}
\newcommand{\x} {\bmath{x}}
\newcommand{\y} {\bmath{y}}
\newcommand{\w} {\bmath{w}}
\newcommand{\z} {\bmath{z}}
\title{Fast Extra Gradient Methods for Smooth Structured Nonconvex-Nonconcave Minimax Problems}
\author{%
	Sucheol Lee\\
	Department of Mathematical Sciences\\
	KAIST\\
	Daejeon, Republic of Korea\\
	\texttt{csfh1379@kaist.ac.kr}\\
	\And
	Donghwan Kim\\
	Department of Mathematical Sciences\\
	KAIST\\
	Daejeon, Republic of Korea\\
	\texttt{donghwankim@kaist.ac.kr}\\
}
\begin{document}

\maketitle

\begin{abstract}
Modern minimax problems, such as generative adversarial network and adversarial training, are often under a nonconvex-nonconcave setting, and developing 
an efficient
method 
for such setting is of interest. Recently, two variants of the extragradient (EG) method are studied in that direction. First, a two-time-scale variant of the EG, named EG+, was proposed under a smooth structured nonconvex-nonconcave setting, with a slow $\mathcal{O}(1/k)$ rate on the squared gradient norm, where $k$ denotes the number of iterations. Second, another
variant of EG with an anchoring technique, named 
extra anchored gradient (EAG), was studied under a smooth convex-concave setting, yielding a fast $\mathcal{O}(1/k^2)$ rate on the squared gradient norm. Built upon EG+ and EAG, this paper proposes a two-time-scale EG with anchoring, named fast extragradient (FEG), that has a fast $\mathcal{O}(1/k^2)$ rate on the squared gradient norm 
for smooth structured nonconvex-nonconcave problems;
the corresponding saddle-gradient operator
satisfies the negative comonotonicity condition.
This paper further develops its backtracking line-search version, named FEG-A, for the case where the problem parameters are not available. The stochastic analysis of FEG is also provided.
\end{abstract}

\section{Introduction}
Recently, 
nonconvex-nonconcave minimax problems have received an increased attention in the optimization community and the machine learning community due to their applications to generative adversarial network \cite{goodfellow:14:gan} and adversarial training \cite{madry:18:tdl}.
In this paper, we consider a smooth structured nonconvex-nonconcave minimax problem: 
\begin{align}\label{eq:minimax_problem_i}
\min_{\x\in\reals^{d_x}}\max_{\y\in\reals^{d_y}} f(\x,\y),
\end{align}
where $f:\reals^{d_x}\times\reals^{d_y}\rightarrow \reals$ is smooth and is possibly nonconvex in $\x$ for fixed $\y$, and possibly nonconcave in $\y$ for fixed $\x$;
the saddle-gradient operator
$\F := (\nabla_x f, -\nabla_y f)$
satisfies the negative comonotonicity~\cite{bauschke:21:gmo}.
We construct
an efficient (first-order) method, using a saddle gradient
operator 
$\F$ 
for finding a first-order stationary point 
of the 
problem \eqref{eq:minimax_problem_i}.

So far
little is known under the nonconvex-nonconcave setting, compared to the convex-concave setting.
Recent works \cite{dang:15:otc,diakonikolas:21:emf,liu2020towards,malitsky:20:gro,mertikopoulos:19:omd,song:20:ode,zhou:17:smd} studied extragradient-type methods \cite{korpelevich:76:aem, popov:80:amo} for minimax problems under various structured nonconvex-nonconcave settings.
In other words, they consider 
various non-monotone conditions on $\F$, such as the Minty variational inequality (MVI) condition~\cite{dang:15:otc}, the weak MVI condition~\cite{diakonikolas:21:emf}, and the negative comonotonicity~\cite{bauschke:21:gmo}.\footnote{
Relations between the conditions on $\F$
considered in this paper is summarized in Figure~\ref{fig:relations}.} 
Among them, this paper 
focuses on
the negative comonotonicity condition 
for a Lipschitz continuous $\F$. To the best of our knowledge, the following two-time-scale variant of the extragradient method, named EG+:
\begin{equation}\label{eq:egp}\tag{EG+}
\begin{aligned}
\z_{k+1/2} &= \z_k - \frac{\alpha_k}{\beta}\F\z_k,\\
\z_{k+1} &= \z_k - \alpha_k \F\z_{k+1/2},
\end{aligned}
\end{equation}
is the only known (explicit)\footnote{
A proximal point method
converges under the negative comonotonicity
\cite{bauschke:21:gmo,kohlenbach:21:otp},
but such implicit method is not preferable over explicit methods
in practice due to its implicit nature.
} 
method, using $\F$, that converges under the considered setting\footnote{ 
The EG+ was originally shown to 
work
under
the weak MVI condition of $\F$, which is weaker than the negative comonotonicity.}
\cite{diakonikolas:21:emf}, where $\z_k := (\x_k,\y_k)$.
The EG+, however,
has a slow $\mathcal{O}(1/k)$ rate
on the squared gradient norm.
Note that
a similar two-time-scale approach
has been found to stabilize
the stochastic extragradient method
with unbounded noise variance
\cite{hsieh:20:eau}.



Meanwhile, under the smooth convex-concave setting, recent works \cite{diakonikolas:20:hif,kim2021accelerated,lieder:21:otc,ryu:19:oao,yoon:21:aaf} suggest that Halpern-type \cite{halpern:67:fpo} (or anchoring) methods, 
performing
a convex combination of an initial point $\z_0$ and the last updated point $\z_k$
at each iteration, has 
a fast
$\mathcal{O}(1/k^2)$ rate
in terms of the squared gradient norm.
%
\comment{
\cite{kim2021accelerated,lieder:21:otc} showed that the
(implicit) Halpern iteration~\cite{halpern:67:fpo}
with appropriately chosen step coefficients
has an $\mathcal{O}(1/k^2)$ rate on the squared norm
of a monotone $\F$.
Then,
for a cocoercive $\F$,
an (explicit) version of the Halpern iteration
was studied in~\cite{diakonikolas:20:hif,kim2021accelerated}
that has the same fast rate. 
In addition, \cite{diakonikolas:20:hif} constructed 
a double-loop version of the Halpern iteration
for a Lipschitz continuous and monotone $\F$,
which has a rate $\tilde{\mathcal{O}}(1/k^2)$ on the squared gradient norm,
slower than the rate $\mathcal{O}(1/k^2)$.
While this is promising compared to 
the $\mathcal{O}(1/k)$ rate of
the extragradient methods 
on the squared gradient norm
\cite{ryu:19:oao,solodov:99:aha,yoon:21:aaf},
the computational complexity due to its double-loop nature 
and a relatively slow rate remained a problem.}
%
%
In particular,
\cite{yoon:21:aaf} 
developed the following anchoring variant of the extragradient method, named extra anchored gradient (EAG):
\begin{equation}\label{eq:eag}\tag{EAG}
\begin{aligned}
\z_{k+1/2} &= \z_k + \beta_k(\z_0-\z_k) - \alpha_k \F\z_k,\\
\z_{k+1} &= \z_k + \beta_k(\z_0 - \z_k) - \alpha_k \F\z_{k+1/2}.
\end{aligned}
\end{equation}
This
is the first (explicit) method with a fast $\mathcal{O}(1/k^2)$ rate 
on the squared gradient norm,
when $\F$ satisfies both the Lipschitz continuity and the monotonicity.
\cite{yoon:21:aaf} also showed that such $\mathcal{O}(1/k^2)$ rate
is optimal for first-order methods using a Lipschitz continuous and monotone $\F$. 

%


Built upon both EG+ and EAG,
this paper studies the following class of
two-time-scale anchored extragradient methods, named fast extragradient (FEG):
%
\begin{equation}\tag{Class FEG}\label{alg:special}
\begin{aligned}
\z_{k+1/2} &= \z_k + \beta_k(\z_0-\z_k) - (1-\beta_k)(\alpha_k+2\rho_k)\F\z_k, \\
\z_{k+1} &= \z_k + \beta_k(\z_0-\z_k) - \alpha_k\F\z_{k+1/2} - (1-\beta_k)2\rho_k\F\z_k.
\end{aligned}
\end{equation}
Note that~\eqref{alg:special}
reuses the $\F\z_k$ term in the $\z_{k+1}$ update,
unlike the standard extragradient-type methods,
which we found essential 
for handling the negative comonotonicity condition.
We leave further understanding the use of $\F\z_k$
and the formulation of~\eqref{alg:special}
as future work.
The proposed FEG method (with appropriately chosen step coefficients 
$\alpha_k$, $\beta_k$ and $\rho_k$ discussed later)
has an $\mathcal{O}(1/k^2)$ 
rate on the squared gradient norm, 
under the Lipschitz continuity 
and the negative comonotonicity conditions on $\F$. 
To the best of our knowledge, this is the first accelerated method under the nonconvex-nonconcave setting.
The FEG also has value 
under the smooth convex-concave setting. First, when $\F$ is Lipschitz continuous and monotone, the rate bound of FEG is about 27/4 times smaller than that of EAG. Also note that the rate bound of FEG is only about four times larger than the $\mathcal{O}(1/k^2)$ lower complexity bound of first-order methods under such setting \cite{yoon:21:aaf},
further closing the gap between the lower and upper complexity bounds. 
Second, when $\F$ is cocoercive, 
FEG has a rate faster than that of a version 
of Halpern iteration~\cite{halpern:67:fpo} in \cite{diakonikolas:20:hif}.

We also develop an adaptive variant of FEG, named FEG-A, which updates its parameters, $\alpha_k$ and $\rho_k$ in \eqref{alg:special}, adaptively using a backtracking line-search \cite{beck:09:afi,malitsky2018first,mukkamala2020convex}. FEG requires the knowledge of the two problem parameters for the Lipschitz continuity and the comonotonicity of $\F$. However,
those global parameters can be conservative, and in practice, they are even usually unknown.
For such cases, the FEG-A
adaptively and locally estimates the problem parameters,
while preserving the fast
rate $\mathcal{O}(1/k^2)$ 
on the squared gradient norm
for smooth structured nonconvex-nonconcave minimax problems. 

Lastly, we study a stochastic version of FEG, named S-FEG,
which uses an unbiased stochastic estimate of $\F\z$,
\ie, $\tilde{\F}{\z} = \F\z + \xi$,
instead of $\F\z$ in FEG,
where $\xi$ denotes a stochastic noise.
For a Lipschitz continuous and monotone $\F$,
we provide a convergence analysis in terms of the expected squared gradient norm.
In specific,
we show that the S-FEG is stable with a rate $\mathcal{O}(1/k^2) + \mathcal{O}(\epsilon)$,
when 
the noise variance decreases
in the order of $\mathcal{O}(\epsilon/k)$,
while being unstable otherwise due to error accumulation.
This is similar to the convergence behavior of a stochastic version
of Nesterov's fast gradient method~\cite{nesterov:83:amf,nesterov:05:smo}, 
observed in~\cite{devolder:11:sfo}, for smooth convex minimization.

Our main contributions are summarized as follows.
\begin{itemize}
	\item We propose the FEG method that has an accelerated convergence rate $\mathcal{O}(1/k^2)$ 
	on the squared gradient norm
	for smooth structured nonconvex-nonconcave minimax problems.
	\item We present that the FEG method has a rate faster than 
	those of the EAG and the Halpern iteration for smooth convex-concave problems.
	\item We construct a backtracking line-search version of FEG, named FEG-A, for the case where the Lipschitz constant and comonotonicity parameters of $\F$ are unavailable.
	\item We analyze a stochastic version of FEG, named S-FEG,
	for smooth convex-concave problems.
\end{itemize}

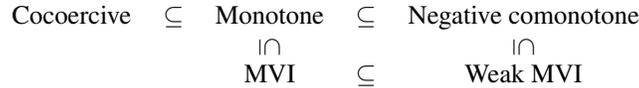
\begin{figure}[b]
    \centering
    \comment{
    \begin{tikzpicture}[scale=0.8, every node/.style={scale=0.8}]
    \draw 
    (-3,-3) rectangle (5,3) (1,3) node [text=black, above] {Weak MVI}
    (0,-0.5) circle (2) (-1.5,1.1) node [text=black, above] {MVI}
    (1.7,1.4) node [text=black, above] {Negative comonotone}
    (0.3,0.7) node [text=black, above] {Monotone}
    (0.5,-0.5) circle (1) (0.5,-0.7) node [text=black, above] {Cocoercive}
    (1.5,0) circle (2.5) (2.5,2.5) node [text=black, above] {Comonotone};
    \end{tikzpicture}
    }
    \begin{tabular}{ccccc}
		Cocoercive & $\subseteq$ & Monotone & $\subseteq$ & Negative comonotone\\
		& &\rotatebox[origin=c]{-90}{$\subseteq$} & & \rotatebox[origin=c]{-90}{$\subseteq$}\\
		& & MVI & $\subseteq$ & Weak MVI
	\end{tabular}
    \caption{Relations between the conditions on $\F$.}
    \label{fig:relations}
\end{figure}

\section{Related work}
\comment{
We are interested in the following minimax problem:
\begin{align}\label{eq:minimax_problem}
\min_{\x\in\reals^{d_x}}\max_{\y\in\reals^{d_y}} f(\x,\y)
\end{align}
where $f:\reals^{d_x}\times\reals^{d_y}\rightarrow \reals$. We say $f$ in \eqref{eq:minimax_problem} is convex-concave (resp. nonconvex-nonconcave) when $f$ is convex (resp. possibly nonconvex) in $\x\in\reals^{d_x}$ for all fixed $\y\in\reals^{d_y}$ and concave (resp. possibly nonconcave) in $\y\in\reals^{d_y}$ for all fixed $\x\in\reals^{d_x}$. For a continuously differentiable function $f$, let us define the \textit{saddle  gradient operator} of $f$, $\F:=(\nabla_{\x}f,-\nabla_{\y}f)$, for later use.
Let $\|\cdot\|$ be the Euclidean norm and $d:=d_x+d_y$.
}
\subsection{Methods for convex-concave minimax problems}
The extragradient method \cite{korpelevich:76:aem}
is one of the widely used methods for solving smooth convex-concave minimax problems
(see, \eg, \cite{dang:15:otc,diakonikolas:21:emf,liu2020towards,malitsky:20:gro,mertikopoulos:19:omd,song:20:ode,zhou:17:smd} for its extensions and applications).
In terms of the \textit{duality gap},  $\max_{\y'\in\mathcal{Y}}f(\x,\y') - \min_{\x'\in\mathcal{X}}f(\x',\y)$, where $\mathcal{X}$ and $\mathcal{Y}$ are compact\footnote{ 
The convergence analysis on the duality gap of the extragradient type methods are generalized under
the unbounded domain assumption in \cite{mokhtari:20:aua,monteiro:10:otc,monteiro2011complexity}.}
domains,
the ergodic iterate of the extragradient-type methods 
\cite{nemirovski:04:pmw,nesterov:07:dea} have 
an $\mathcal{O}(1/k)$ rate.
Such $\mathcal{O}(1/k)$ rate on the duality gap is order-optimal for the first-order methods \cite{nemirovski:83,ouyang:21:lcb},
leaving no room for improvement.
On the other hand, the last iterate of
the extragradient method 
has a slower $\mathcal{O}(1/\sqrt{k})$ rate
on the duality gap, 
under an additional assumption that $\F$ has a 
Lipschitz derivative \cite{golowich:20:lii}. 
In terms of
the \textit{squared gradient norm}, $\|\F\z\|^2$, 
the best iterate of
the extragradient-type methods 
\cite{korpelevich:76:aem,popov:80:amo} have an
$\mathcal{O}(1/k)$ rate
\cite{ryu:19:oao,solodov:99:aha,yoon:21:aaf}.
The last iterate of
the extragradient method also has a rate $\mathcal{O}(1/k)$, 
when $\F$ is further assumed to have a Lipschitz derivative
\cite{golowich:20:lii}.
Unlike the duality gap, the $\mathcal{O}(1/k)$ rate on the squared gradient norm is not optimal \cite{yoon:21:aaf}.
From now on throughout this paper, 
we mainly study and compare the convergence rates on
the squared gradient norm,
which still has room for improvement in convex-concave problems, 
and has meaning for nonconvex-nonconcave minimax problems,
unlike the duality gap.


Recently, \cite{diakonikolas:20:hif,kim2021accelerated,lieder:21:otc,ryu:19:oao,yoon:21:aaf} found that Halpern-type \cite{halpern:67:fpo} (or anchoring) methods 
yield a fast $\mathcal{O}(1/k^2)$ rate
in terms of the squared gradient norm for minimax problems.
\cite{kim2021accelerated,lieder:21:otc} showed that the
(implicit) Halpern iteration~\cite{halpern:67:fpo}
with appropriately chosen step coefficients
has an $\mathcal{O}(1/k^2)$ rate on the squared norm
of a monotone $\F$.
Then,
for a cocoercive $\F$,
an (explicit) version of the Halpern iteration
was studied in~\cite{diakonikolas:20:hif,kim2021accelerated}
that has the same fast rate. 
In addition, \cite{diakonikolas:20:hif} constructed 
a double-loop version of the Halpern iteration
for a Lipschitz continuous and monotone $\F$,
which has a rate $\tilde{\mathcal{O}}(1/k^2)$ on the squared gradient norm,
slower than the rate $\mathcal{O}(1/k^2)$.
While this is promising compared to 
the $\mathcal{O}(1/k)$ rate of
the extragradient methods 
on the squared gradient norm
\cite{ryu:19:oao,solodov:99:aha,yoon:21:aaf},
the computational complexity due to its double-loop nature 
and a relatively slow rate remained a problem.
Very recently,
\cite{yoon:21:aaf} proposed the extra anchored gradient (EAG) method,
which is the first 
(explicit) method
with a fast $\mathcal{O}(1/k^2)$ rate
for smooth convex-concave minimax problems, \ie, for Lipschitz continuous and monotone operators. In addition, \cite{yoon:21:aaf} proved that the EAG is order-optimal by showing that the lower complexity bound of first-order methods is $\Omega(1/k^2)$.

\comment{
Recently, \cite{lieder:21:otc} revealed that the Halpern iteration \cite{halpern:67:fpo} 
(with appropriately chosen step coefficients)
has a fast convergence rate for the fixed point problems with nonexpansive operators. 
Follow-up works \cite{diakonikolas:20:hif, kim2021accelerated, ryu:19:oao, yoon:21:aaf} then showed that the Halpern-type (or anchoring) technique also provides a fast convergence rate $\mathcal{O}(1/k^2)$ on the squared gradient norm for convex-concave minimax problems. 
\cite{kim2021accelerated,lieder:21:otc} 
present that the (implicit) Halpern iteration 
has a fast $\mathcal{O}(1/k^2)$ rate 
for monotone inclusion problems, 
including (possibly non-smooth) convex-concave minimax problems.
\cite{diakonikolas:20:hif} proposed a variant of the Halpern iteration, which has a rate $\mathcal{O}(1/k^2)$ for cocoercive operators \cite{bauschke:21:gmo}.
Also, \cite{diakonikolas:20:hif} proposed a double-loop version of the Halpern iteration, which
has a rate $\tilde{\mathcal{O}}(1/k^2)$ for the case $\F$ is Lipschitz continuous and monotone. 
Very recently,
\cite{yoon:21:aaf} proposed the extra anchored gradient (EAG) method,
which is the first 
accelerated explicit method
for the smooth convex-concave minimax problems, \ie, $\F$ is Lipschitz continuous and monotone. In addition, \cite{yoon:21:aaf} proved that the EAG is order-optimal by showing that the lower complexity bound of first-order methods is $\mathcal{O}(1/k^2)$.
}

\comment{
Even though the duality gap is a widely used suboptimality measure for the minimax problems, it has several disadvantages \cite{diakonikolas:20:hif,yoon:21:aaf}.
First, the duality gap usually cannot be computed directly in practice.
Second, the domains $\mathcal{X}$ and $\mathcal{Y}$ need to be bounded to guarantee that the duality gap has finite value.
Last, the duality gap is not meaningful for the nonconvex-nonconcave minimax problems. Hence, we consider the squared gradient norm, $\|\F\z\|^2$, 
as an alternative suboptimality measure. 
}

\subsection{Methods for nonconvex-nonconcave minimax problems}
Some recent literature considered relaxing the monotonicity condition of the saddle gradient operator to tackle modern nonconvex-nonconcave minimax problems.
For example, 
the Minty variational inequality (MVI) condition,
\ie, there exists $\z_*\in\Z_*(\F)$ satisfying $\inprod{\F\z}{\z-\z_*}\ge 0$ for all $\z\in\reals^d$ where   $\Z_*(\F):=\{\z_*\in\reals^d:\F\z_*=\zero\}$,
is studied in \cite{dang:15:otc,liu:21:foc,liu2020towards,malitsky:20:gro}.
This condition is also studied under the name, the coherence, in \cite{mertikopoulos:19:omd,song:20:ode,zhou:17:smd}.
Moreover, 
\cite{diakonikolas:21:emf} 
considered
a 
weaker condition, named the weak MVI condition, \ie, for some $\rho<0$, there exists $\z_*\in \Z_*(\F)$ satisfying
$\inprod{\F\z}{\z-\z_*}\ge \rho\|\F\z\|^2$ for all $\z\in\reals^d$.
The weak MVI condition is
implied by the negative comonotonicity \cite{bauschke:21:gmo} or, equivalently, the (positive) cohypomonotonicity \cite{combettes:04:pmf}. The comonotonicity will be further
discussed in the upcoming section.

For $L$-Lipschitz continuous 
$\F$, 
\cite{dang:15:otc,song:20:ode} showed that the extragradient-type methods
have an $\mathcal{O}(1/k)$ rate on the squared gradient norm 
under the MVI condition, 
and \cite{diakonikolas:21:emf} 
developed the~\eqref{eq:egp} method
under the weak MVI condition (and thus under the negative comonotonicty),
which also has an $\mathcal{O}(1/k)$ rate on the squared gradient norm. 
To the best of our knowledge, 
there is no known accelerated method 
for the nonconvex-nonconcave setting; 
our proposed FEG method is
the first method to have a fast $\mathcal{O}(1/k^2)$ rate
under the nonconvex-nonconcave setting.
The convergence rates of the 
existing
methods and the FEG on the squared gradient norm 
are summarized in Table~\ref{tab:comparison}.

\begingroup
\begin{table}[h]
    \renewcommand{\arraystretch}{0.7}
    \centering
    \setlength{\tabcolsep}{2pt}
    \caption{Comparison of the convergence rates 
        of the existing extragradient-type methods and the FEG,
        with respect to 
        the squared gradient norm,
        for smooth structured minimax problems,
        under various assumptions on
        the Lipschitz continuous saddle gradient operator $\F$.}
    \label{tab:comparison}
    \begin{tabular}{c c c c c c c}
        \toprule[1pt]
        \multicolumn{2}{c}{\multirow{2}{*}{Method}} & \multicolumn{2}{c}{Convex-concave} & \multicolumn{3}{c}{Nonconvex-nonconcave}\\
        \cmidrule(lr){3-4} \cmidrule(lr){5-7}
         & & Cocoercive & Monotone & Negative comonotone & MVI & Weak MVI\\
        \midrule[1pt]
        \multirow{2}{*}{Normal}& EG \cite{dang:15:otc,song:20:ode} & $\mathcal{O}(1/k)$ & $\mathcal{O}(1/k)$ &  & $\mathcal{O}(1/k)$ &\\
        & EG+ \cite{diakonikolas:21:emf} & $\mathcal{O}(1/k)$ & $\mathcal{O}(1/k)$ & $\mathcal{O}(1/k)$ & $\mathcal{O}(1/k)$ & $\mathcal{O}(1/k)$\\
        \midrule
        \multirow{3}{*}{Accelerated} 
        & Halpern \cite{halpern:67:fpo,diakonikolas:20:hif} & $\mathcal{O}(1/k^2)$ & $\tilde{\mathcal{O}}(1/k^2)$ & & & \\
        & EAG \cite{yoon:21:aaf} & $\mathcal{O}(1/k^2)$ & $\mathcal{O}(1/k^2)$ & & & \\
        & FEG (this paper) & $\mathcal{O}(1/k^2)$ & $\mathcal{O}(1/k^2)$ & $\mathcal{O}(1/k^2)$ & & \\
        \bottomrule[1pt]
    \end{tabular}
\end{table}
\endgroup

\section{Preliminaries}\label{sec:preliminaries}

The followings are the two main assumptions for the saddle gradient operator $\F$
of the smooth structured nonconvex-nonconcave problem~\eqref{eq:minimax_problem_i}.
Under such assumptions, we 
develop efficient methods that
find a first-order stationary point $\z_*\in\Z_*(\F)$ where $\Z_*(\F):=\{\z_*\in\reals^d:\F\z_*=\zero\}$.
\begin{assumption}[$L$-Lipschitz continuity]\label{assum:lipschitz}
	For some $L\in (0,\infty)$, $\F$ satisfies
	\begin{align*}
	\|\F\z-\F\z'\|\le L\|\z-\z'\|,\quad \forall \z,\z'\in\reals^d.
	\end{align*}
\end{assumption}
\begin{assumption}[$\rho$-Comonotonicity]\label{assum:comonotone}
	For some $\rho\in\big(-\frac{1}{2L},\infty\big)$, 
	$\F$ satisfies
	\begin{align*}
	\inprod{\F\z-\F\z'}{\z-\z'}\ge \rho\|\F\z-\F\z'\|^2, \quad \forall \z,\z'\in\reals^d.
	\end{align*}
\end{assumption}

The $\rho$-comonotonicity consists of three cases 
depending on the choice of $\rho$;  
the negative comonotonicity when $\rho<0$, 
the monotonicity when $\rho=0$, 
and the cocoercivity when $\rho>0$. 
The negative comonotonicity is weaker than 
the other two,
and is the main focus of this paper.
The following is an examplary nonconvex-nonconcave condition
that is stronger than the negative comonotonicity
\cite{bauschke:21:gmo,combettes:04:pmf}.
\comment{
First,
the case $\rho$<0 corresponds to the negative comonotonicity \cite{bauschke:21:gmo} and the |$\rho$|-cohypomonotonicity \cite{combettes:04:pmf}, which is weaker than the monotonicity and is the main focus of this paper.
Second, the case $\rho$=0 corresponds to the monotoncity.
Finally, for the case $\rho>0$, the $\rho$-comonotonicity is stronger than the monotonicity and studied under the name, $\rho$-cocoercivity. As a remark, if $\F$ is $\rho$-cocoercive,
then it is also an $\frac{1}{\rho}$-Lipschitz continuous operator.
Hence, when we consider an $L$-Lipschitz continuous and $\rho$-comonotone operator with $\rho>0$, we may assume that $\rho\in(0,\frac{1}{L}]$ without loss of generality.
}
\begin{example}
	\label{ex:comonotone,relation}
	Let $f$ be twice continuously differentiable and 
	$\gamma$-weakly-convex-weakly-concave.
	Further assume that $f$ satisfies
	\begin{align}
	\nabla_{\x\x}^2f + \nabla_{\x\y}^2f(\eta\I - \nabla_{\y\y}^2f)^{-1}\nabla_{\y\x}^2f &\succeq \alpha\I, \label{eq:interaction} \\
	-\nabla_{\y\y}^2f + \nabla_{\y\x}^2f(\eta\I + \nabla_{\x\x}^2f)^{-1}\nabla_{\x\y}^2f &\succeq \alpha\I, \nonumber
	\end{align}
	for some $\alpha\ge0$ and $\eta>\gamma$,
	named $\alpha\ge0$-interaction dominant
	condition in~\cite{grimmer:20:tlo}.
	Then, the saddle gradient of $f$ 
	satisfies the $-\frac{1}{\eta}$-negative comonotonicity.
	(See Appendix~\ref{appx:comonotone,relation}.)
	For any $\gamma$-weakly-convex-weakly-concave function,
	the condition~\eqref{eq:interaction} holds with $\alpha=-\gamma<0$.
	Its extreme case is $f(x,y)=-\frac{\gamma}{2}x^2 + \frac{\gamma}{2}y^2$,
	where there is no interaction between $x$ and $y$.
	On the other hand, when the the second terms in the left-hand side of~\eqref{eq:interaction}
	are sufficently positive definite,
	a nonconvex-nonconave function satisfies
	the condition~\eqref{eq:interaction} with a nonnegative $\alpha$.
	In specific, the $\alpha\ge0$-interaction dominant condition 
	is satisfied 
	when the interaction term of Hessian $\nabla_{\x\y}^2f$
	is dominating any negative curvature in 
	Hessians $\nabla_{\x\x}^2f$ and $-\nabla_{\y\y}^2f$
	\cite{grimmer:20:tlo}.
\end{example}
We next present our proposed FEG, and
illustrate that the FEG outperforms existing methods such as EG+, EAG, and the Halpern iteration, for each three comonoticity case, respectively.

\comment{
Recently, \cite{diakonikolas:21:emf} studied the weak Minty variational inequality (MVI) condition, there exists $\z_*\in\Z_*(\F)$ satisfying
\begin{align*}
\inprod{\F\z}{\z-\z_*}\ge \rho\|\F\z\|^2 \qquad\forall \z\in\reals^d
\end{align*}
for some $\rho<0$, which is weaker than the negative comonotonicity. 
}

\section{Fast extragradient (FEG) method 
for Lipschitz continuous and comonotone operators} \label{sec:FEG}
\comment{
More specifically, we consider the method in 
the following form, named two-time-scale and anchored extragradient (TAEG) method.
\begin{equation}\tag{TAEG}\label{alg:special}
\begin{aligned}
\z_{k+1/2} &= \z_k + \beta_k(\z_0-\z_k) - (1-\beta_k)(\alpha_k+2\rho_k)\F\z_k\\
\z_{k+1} &= \z_k + \beta_k(\z_0-\z_k) - \alpha_k\F\z_{k+1/2} - (1-\beta_k)2\rho_k\F\z_k
\end{aligned}
\end{equation}
for all $k\ge 0$.
}

This section considers an instance
of \eqref{alg:special} with $\alpha_k = \frac{1}{L}$, $\beta_k = \frac{1}{k+1}$, and $\rho_k = \rho$ for all $k\ge 0$.
The resulting method, named FEG, is illustrated in Algorithm~\ref{alg:feg},
which has an
$\mathcal{O}(1/k^2)$ fast rate
with respect to the squared gradient norm,
in Theorem~\ref{thm:convergence_deterministic}.
The proof of Theorem~\ref{thm:convergence_deterministic}
is provided in Section~\ref{sec:conv_analysis}.
\begin{algorithm}[h]
	\caption{Fast extragradient (FEG) method}
	\label{alg:feg}
	\begin{algorithmic}
		\State {\bf Input:} $\z_0\in\reals^d$, $L\in(0,\infty)$, $\rho\in\big(-\frac{1}{2L},\infty\big)$
		\For{$k=0,1,\ldots$}
		\vspace{-15px}
		\State
		\begin{align*}
		\z_{k+1/2} &= \z_k + \frac{1}{k+1}(\z_0-\z_k) - \Big(1-\frac{1}{k+1}\Big)\Big(\frac{1}{L}+2\rho\Big)\F\z_k \\ 
        \z_{k+1} &= \z_k + \frac{1}{k+1}(\z_0-\z_k) - \frac{1}{L}\F\z_{k+1/2} - \Big(1-\frac{1}{k+1}\Big)2\rho\F\z_k.
		\end{align*}
		\EndFor
	\end{algorithmic}
\end{algorithm}
\comment{
\textbf{Fast extragradient (FEG)}
\begin{equation*}
\begin{aligned}
\z_{k+1/2} &= \z_k + \frac{1}{k+1}(\z_0-\z_k) - \Big(1-\frac{1}{k+1}\Big)\Big(\frac{1}{L}+2\rho\Big)\F\z_k\\
\z_{k+1} &= \z_k + \frac{1}{k+1}(\z_0-\z_k) - \frac{1}{L}\F\z_{k+1/2} - \Big(1-\frac{1}{k+1}\Big)2\rho\F\z_k.
\end{aligned}
\end{equation*}
}

\begin{theorem}\label{thm:convergence_deterministic}
	For the $L$-Lipschitz continuous and $\rho$-comonotone operator $\F$ with $\rho>-\frac{1}{2L}$ and for any $\z_*\in\Z_*(\F)$, the sequence $\{\z_k\}_{k\ge 0}$ generated by FEG satisfies, for all $k\ge 1$,
	\begin{align}
	\|\F\z_k\|^2\le \frac{4\|\z_0-\z_*\|^2}{\Big(\frac{1}{L}+2\rho\Big)^2k^2}
	\label{eq:feg}
	.
	\end{align}
\end{theorem}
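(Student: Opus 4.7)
The plan is to construct a nonincreasing Lyapunov (potential) function $V_k$ along the FEG iterates that (i) lower-bounds $\tfrac{k^2(1/L+2\rho)^2}{4}\|\F\z_k\|^2$ up to nonnegative remainder and (ii) satisfies $V_0\le\|\z_0-\z_*\|^2$. This extends the EAG-style argument of \cite{yoon:21:aaf}, where $\rho=0$, to a general $\rho$-comonotone $\F$. The reuse of $\F\z_k$ in the full step of FEG is specifically tailored so that the extra cross-terms generated by comonotonicity (rather than monotonicity) cancel in the one-step descent.

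First, I would rewrite the FEG updates in the more compact form $\z_{k+1}-\z_{k+1/2}=-\tfrac{1}{L}(\F\z_{k+1/2}-\F\z_k)$, which isolates the extragradient correction and makes the Lipschitz bound immediately available: Assumption~\ref{assum:lipschitz} gives $\|\F\z_{k+1/2}-\F\z_k\|\le L\|\z_{k+1/2}-\z_k\|$ and hence a handle on $\|\z_{k+1}-\z_{k+1/2}\|$. Then I would invoke Assumption~\ref{assum:comonotone} on the pairs $(\z_{k+1/2},\z_k)$, $(\z_{k+1},\z_{k+1/2})$, and possibly $(\z_{k+1},\z_*)$, each yielding a $\rho\|\cdot\|^2$ term whose sign is controllable precisely because $1/L+2\rho>0$.

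Second, I would postulate a potential of the form
\begin{equation*}
V_k \;=\; A_k\|\F\z_k\|^2 \;+\; B_k\langle \F\z_k,\z_k-\z_0\rangle \;+\; C_k\|\z_k-\z_*\|^2,
\end{equation*}
with $A_k$ of order $k^2$ and $B_k$ of order $k$, and determine the scalars so that $V_{k+1}\le V_k$. Iterating gives $V_k\le V_0$; at $k=0$ the inner-product term vanishes since $\z_k-\z_0|_{k=0}=\zero$, and the remaining piece is bounded by $\|\z_0-\z_*\|^2$ using $\F\z_*=\zero$ together with Lipschitz continuity (so that $\|\F\z_0\|\le L\|\z_0-\z_*\|$). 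The asymptotic normalization $A_k/k^2\to (1/L+2\rho)^2/4$ is what ultimately produces the constant $4/(1/L+2\rho)^2$ in the stated bound.

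The main obstacle is verifying the one-step inequality $V_{k+1}\le V_k$. After substituting the FEG updates and expanding, this reduces to showing that a specific quadratic form in $(\F\z_k,\F\z_{k+1/2},\F\z_{k+1},\z_k-\z_0,\z_k-\z_*)$ is nonpositive, once the Lipschitz bound and the three comonotonicity inequalities are added with suitable weights. Tuning $A_k,B_k,C_k$ so that every mixed term either cancels or carries the correct residual sign is the technical core, and the assumption $\rho>-1/(2L)$ is exactly what is needed for the resulting quadratic form to be negative semidefinite; any weaker lower bound on $\rho$ would render the cross terms in $\|\F\z_{k+1/2}-\F\z_k\|^2$ uncontrollable.
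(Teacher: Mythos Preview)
Your outline has the right shape—a descending potential built from $\|\F\z_k\|^2$ and an anchoring inner product—but several concrete ingredients are off, and they diverge from how the paper actually closes the argument.

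First, your ``compact form'' $\z_{k+1}-\z_{k+1/2}=-\tfrac{1}{L}(\F\z_{k+1/2}-\F\z_k)$ is incorrect: subtracting the two FEG updates gives $\z_{k+1}-\z_{k+1/2}=\tfrac{1}{L}\big((1-\beta_k)\F\z_k-\F\z_{k+1/2}\big)$ with $\beta_k=\tfrac{1}{k+1}$, and that extra $(1-\beta_k)$ factor is essential in the one-step descent. Second, the inequalities you plan to use are not the ones that make the cancellation work. The paper's Lemma~\ref{lem:potential} applies Lipschitz continuity to the pair $(\z_{k+1},\z_{k+1/2})$ and $\rho$-comonotonicity to the pair $(\z_{k+1},\z_k)$ \emph{only}; no comonotonicity at $(\z_{k+1/2},\z_k)$ or $(\z_{k+1},\z_{k+1/2})$ is invoked in the descent step. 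Third, the paper's potential is the two-term $V_k=a_k\|\F\z_k\|^2-b_k\langle\F\z_k,\z_0-\z_k\rangle$ with \emph{no} $\|\z_k-\z_*\|^2$ term, and the leading coefficient scales linearly, $a_k=\tfrac{k^2}{2}(\tfrac{1}{L}+2\rho)-k\rho$, not quadratically as you posit. The potential never references $\z_*$, and in fact $V_0=V_1=0$; the descent gives $V_k\le 0$, after which $\z_*$ enters only at the last step via $\langle\F\z_k,\z_*-\z_k\rangle\le-\rho\|\F\z_k\|^2$ (comonotonicity with $\z_*$) and Cauchy--Schwarz on $\langle\F\z_k,\z_0-\z_*\rangle$. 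The square $(\tfrac{1}{L}+2\rho)^2$ and the constant $4$ appear only when you divide the resulting inequality $\tfrac{k}{2}(\tfrac{1}{L}+2\rho)\|\F\z_k\|\le\|\z_0-\z_*\|$ by its left side. So your plan to bake the squared constant into $A_k$ and to bound $V_0$ via $\|\F\z_0\|\le L\|\z_0-\z_*\|$ is a different mechanism that, as written, does not line up with the algebra the FEG updates actually produce.
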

The following example shows that 
the bound~\eqref{eq:feg} of the FEG
is exact for $\rho=0$ and $k=4l+2$.
The bound~\eqref{eq:feg} is not known to be exact in general,
and we leave finding the exact bound as future work.
\begin{example} \label{lem:worst_case}
	Let $f:\reals\times\reals\rightarrow \reals$ be $f(x,y) = Lxy$. 
	Its saddle gradient operator and solution are $\F(x,y) = (Ly,-Lx)$
	and $\z_*=(0,0)$, respectively. 
	For the initial point $\z_0 = (x_0,y_0) = (1,0)$, the sequence $\{\z_k\}_{k\ge0}$ generated by FEG satisfies
	$
	\z_{4l+2} = \big(0,\frac{1}{2l+1}\big)
	$
	for all $l\ge 0$. Hence, $\|\F\z_{4l+2}\|^2 = \frac{L^2}{(2l+1)^2}= \frac{4L^2\|\z_0-\z_*\|^2}{(4l+2)^2}$ for all $l\ge 0$.
	(See Appendix~\ref{appx:worst_case}.)
\end{example}
We next compare 
the rate bound~\eqref{eq:feg}
with existing analyses for the three cases $-\frac{1}{2L}<\rho<0$, $\rho=0$, and $\rho>0$.

\subsection{Comparison to EG+ under the negative comonotonicity ($\rho<0$)}
Under the negative comonotonicity with $-\frac{1}{8L}<\rho<0$, the \eqref{eq:egp} method with $\alpha_k=\frac{1}{2L}$ and $\beta=\frac{1}{2}$ has an $\mathcal{O}(1/k)$ rate on the squared gradient norm.
To the best of our knowledge, this is the best known rate,  
and the FEG has a faster
$\mathcal{O}(1/k^2)$ rate with a wider region of convergence $-\frac{1}{2L}<\rho<0$.
\comment{
EG+ with $\alpha_k = \frac{1}{2L}$ and $\beta = \frac{1}{2}$ in \eqref{eq:egp} has $\mathcal{O}(1/k)$ best-iterate convergence rate,
$\min_{0\le i \le k}\|\F\z_{k+1/2}\|^2\le\frac{2L\|\z_0-\z_*\|^2}{(k+1)(1/(4L)+2\rho)}$ for $\rho\in\big(-\frac{1}{8L},0\big)$.
Since the $\rho$-weak MVI condition is weaker than the $\rho$-comonotone condition for $\rho<0$, this convergence result of EG+ can be directly implemented under our $L$-Lipschitz continuous and $\rho$-comonotone setting where $\rho<0$.
To the best of our knowledge, except FEG, EG+ is the only existing method that finds the zeros of $\F$ satisfying the Lipschitz continuity and negative comonotonicity.
Compared to EG+, FEG has an accelerated $\mathcal{O}(1/k^2)$ last iterate convergence rate and works for a wider region $\rho\in\big(-\frac{1}{2L},0\big)$.
}
\subsection{Comparison to EAG under the monotonicity ($\rho = 0$)}
For an $L$-Lipschitz continuous and monotone operator $\F$, \cite{yoon:21:aaf} proposed two EAG methods, named EAG-C and EAG-V,
with same $\beta_k = \frac{1}{k+2}$
but with different choices of $\alpha_k$. 
EAG-C 
sets $\alpha_k$ to be a constant $\frac{1}{8L}$
for all $k\ge 0$ in \eqref{eq:eag}, 
and has a large constant 
$260$ in its convergence rate, $\|\F\z_k\|^2\le\frac{260 L^2\|\z_0-\z_*\|^2}{(k+1)^2}$ for all $k\ge 0$. 
On the other hand, while EAG-V requires a complicated recursive update for $\{\alpha_k\}$, 
$\alpha_{k+1} = \frac{\alpha_k}{1-\alpha_k^2L^2}\big(1-\frac{(k+2)^2}{(k+1)(k+3)}\alpha_k^2L^2\big)$
for all $k\ge 0$, 
with $\alpha_0=\frac{0.618}{L}$, 
its rate
has a smaller constant $27$. 

\comment{
On the other hand, EAG-V has smaller constant factor in its convergence rate, $\|\F\z_k\|^2\le\frac{27 L^2\|\z_0-\z_*\|^2}{(k+1)(k+2)}$ for all $k\ge 0$, but its step size requires complicated recursive update,
$
\alpha_{k+1} = \frac{\alpha_k}{1-\alpha_k^2L^2}\Big(1-\frac{(k+2)^2}{(k+1)(k+3)}\alpha_k^2L^2\Big)
$
for $k\ge 0$ where $\alpha_0=\frac{0.618}{L}$ in \eqref{eq:eag}.
}

The FEG takes a constant $\alpha_k = \frac{1}{L}$, 
unlike EAG-V,
but has an even smaller constant $4$ 
in its convergence rate $\|\F\z_k\|^2\le \frac{4 L^2\|\z_0-\z_*\|^2}{k^2}$
for $\rho=0$. 
Therefore, the FEG with $\rho=0$ has about $260/4$-times and $27/4$-times faster convergence rate compared to those of EAG-C and EAG-V, respectively.
Furthermore, the rate bound of FEG with $\rho=0$ is only about $4$-times larger
than the lower complexity bound 
of first-order methods under the considered setting
\cite{yoon:21:aaf},
reducing the gap between the lower and upper complexity bounds
from $27$ to $4$.


\subsection{Comparison to the Halpern iteration under the cocoercivity ($\rho>0$)}
For a $\rho$-cocoercive operator $\F$, 
%
an (explicit) version of Halpern iteration \cite{halpern:67:fpo},
studied in \cite{diakonikolas:20:hif},
has a fast rate,
$\|\F\z_k\|^2\le\frac{\|\z_0-\z_*\|^2}{\rho^2k^2}$.
Note that while the $\rho$-cocoercivity 
implies the $\frac{1}{\rho}$-Lipschitz continuity,
there is case where the $\rho$-cocoercive 
(and thus Lipschitz continuous) operator 
has a Lipschitz constant $L$
smaller than $\frac{1}{\rho}$.
Since $L \le \frac{1}{\rho}$,
the FEG has a rate $\|\F\z_k\|^2\le\frac{4\|\z_0-\z_*\|^2}{(1/L + 2\rho)^2k^2} = \frac{4\|\z_0-\z_*\|^2}{9\rho^2k^2}$ 
that is faster than that of Halpern iteration. However, 
if we take into account
that the FEG requires 
computing
the saddle gradient 
twice per iteration, 
unlike Halpern iteration studied in \cite{diakonikolas:20:hif}, 
the FEG method has a slower rate in terms of the number of gradient computations.
If we narrow down to the case $L < \frac{1}{2\rho}$,
the FEG has a faster rate, $\|\F\z_k\|^2\le\frac{4\|\z_0-\z_*\|^2}{(1/L + 2\rho)^2k^2}< \frac{\|\z_0-\z_*\|^2}{4\rho^2k^2}$. 
For such case, 
the FEG has a rate faster than that of the Halpern iteration, 
even in terms of the number of gradient computations.

\subsection{Toy example}
We performed a toy experiment on 
a simple quadratic function,
	$f(x,y) = \frac{\rho L^2}{2}x^2 + L\sqrt{1-\rho^2 L^2}xy - \frac{\rho L^2}{2} y^2,$
which has an $L$-Lipschitz continuous and $\rho$-comonotone saddle gradient. For the case $\rho=-\frac{1}{3L}$ and $L=1$, 
Figure~\ref{fig:comonotone_example} illustrates
that the FEG converges with an accelerated rate
whereas
EG+, EAG-C, EAG-V, and the (explicit) version of Halpern iteration \cite{diakonikolas:20:hif} diverge.
This example presents that the existing guarantees
on convergence and acceleration of the aforementioned methods under the convex-concave setting
do not generalize to the nonconvex-nonconcave setting.


\begin{figure}
	\centering
	\includegraphics[width=.5\textwidth]{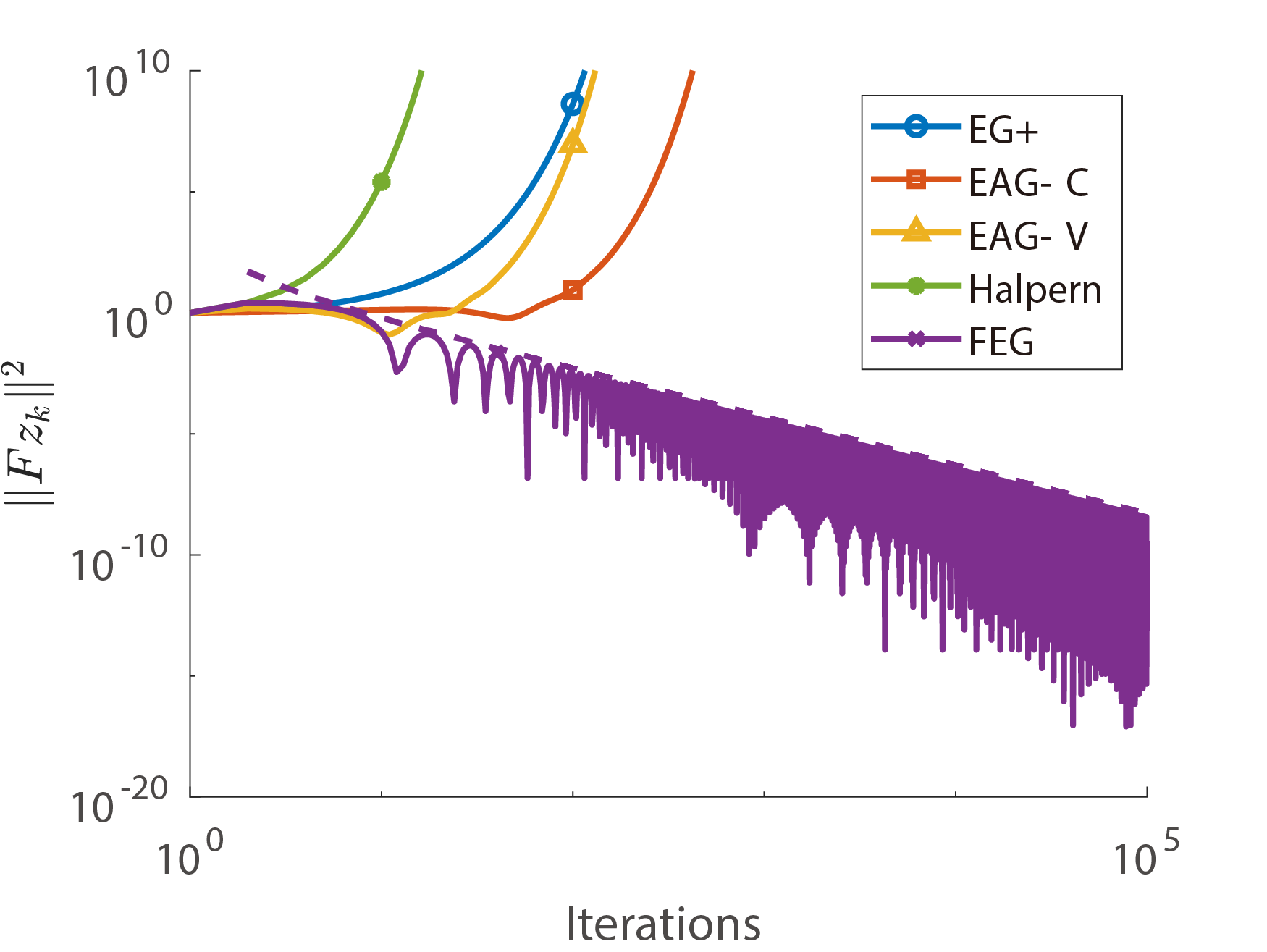}
	\caption{Numerical result with $f(x,y) = -\frac{1}{6}x^2 + \frac{2\sqrt{2}}{3}xy + \frac{1}{6}y^2$. The dashed line represents the theoretical bound~\eqref{eq:feg} of FEG.}
	\label{fig:comonotone_example}
\end{figure}

\section{FEG with backtracking line-search}\label{sec:FEG-A}
The FEG requires the knowledge of the two global parameters $L$ and $\rho$ for Lipschitz continuity and comonotonicity, respectively. Those global parameters are often difficult to compute in practice and can be locally conservative. To handle these two disadvantages, we employ the backtracking line-search technique \cite{beck:09:afi,malitsky2018first,mukkamala2020convex} in FEG. 
We adaptively decrease 
the two step size parameters,
$\tau$ and $\eta$, to satisfy the both conditions, the local $\frac{1}{\tau}$-Lipschitz continuity and the $\frac{\eta-\tau}{2}$-comonotonicity.\footnote{
In specific, $\tau$ and $\eta$ locally estimate
$\frac{1}{L}$ and $\frac{1}{L} + 2\rho$, respectively.
One could have directly estimate $\rho$, instead of $\frac{1}{L} + 2\rho$,
but this complicates the line-search process
to handle both positive and negative values of $\rho$,
unlike our choice of $\eta$ in FEG-A.}
A pseudocode of the resulting method, named FEG-A, is 
illustrated in Algorithm~\ref{alg:feg_a}. For a detailed description of 
the FEG-A, see Algorithm~\ref{alg:feg_a_detailed} in Appendix~\ref{appx:feg_a_detailed}.

\begin{algorithm}[h]
	\caption{Fast extragradient method with adaptive step size (FEG-A)}
	\label{alg:feg_a}
	\begin{algorithmic}
		\State {\bf Input:} $\z_0\in\reals^d$, 
		$\tau_{-1}\in(\max\{0,-2\rho\},\infty)$, $\eta_0\in(0,\infty)$,
		$\delta\in(0,1)$
		\State Find the smallest nonnegative integer $i_0$ such that $\hat{\z} = \z_0 - 
		\tau_{-1}(1-\delta)^{i_0}\F\z_0$ satisfies
		$
		\|\F\hat{\z}-\F\z_0\| \le \frac{1}{\tau_{-1}(1-\delta)^{i_0}}\|\hat{\z}-\z_0\|$. 
		\State $\tau_0 = \tau_{-1}(1-\delta)^{i_0}$, $\z_1 = \z_0 - \tau_0\F\z_0$.
		\For{$k=1,2,\ldots$}
		\State $i_k=j_k=0$.
		\State 
		Increase each $i_k$ and $j_k$ one by one until
		\begin{align*}
		\hat{\z}_{k+1/2} &= \z_k + \frac{1}{k+1}(\z_0-\z_k) - \Big(1-\frac{1}{k+1}\Big)\eta_{k-1}(1-\delta)^{j_k} \F\z_k \qquad\text{and}\\
		\hat{\z}_{k+1} &= \z_k + \frac{1}{k+1}(\z_0-\z_k) - \tau_{k-1}(1-\delta)^{i_k}\F\z_{k+1/2}
		    \\
		&\quad\quad\; - \Big(1-\frac{1}{k+1}\Big)(\eta_{k-1}(1-\delta)^{j_k}-\tau_{k-1}(1-\delta)^{i_k})\F\z_k
		\end{align*}
		\State satisfy both conditions,
		\begin{align*}
		    \|\F\hat{\z}_{k+1}-\F\hat{\z}_{k+1/2}\| &\le \frac{1}{\tau_{k-1}(1-\delta)^{i_k}}\|\hat{\z}_{k+1}-\hat{\z}_{k+1/2}\| \qquad\text{and}\\
		    \inprod{\F\hat{\z}_{k+1}-\F\z_k}{\hat{\z}_{k+1}-\z_k} &\ge \frac{\eta_{k-1}(1-\delta)^{j_k}-\tau_{k-1}(1-\delta)^{i_k}}{2}\|\F\hat{\z}_{k+1}-\F\z_k\|^2.
		\end{align*}
		\State $\tau_k = \tau_{k-1}(1-\delta)^{i_k}$, $\eta_k = \eta_{k-1}(1-\delta)^{j_k}$, $\z_{k+1} = \hat{\z}_{k+1}$.
		\EndFor
	\end{algorithmic}
\end{algorithm}
\comment{
\begin{algorithm}[h]
	\caption{Fast extragradient method with adaptive step size (FEG-A)}
	\label{alg:feg_a}
	\begin{algorithmic}
		\State {\bf Input:} $\z_0$, $\tau_{-1}\in\big(\frac{1-\delta}{L},\infty\big)$, $\eta_{0}\in \big(\frac{(1-\delta)^2}{L}+(1-\delta)2\rho,\infty\big)$, $\delta\in(0,1)$
		\State Find the smallest nonnegative integer $i_0$ such that $\hat{\z} = \z_0 - \hat{\tau}\F\z_0$ satisfies
		$
		\|\F\hat{\z}-\F\z_0\| \le \frac{1}{\tau_{-1}(1-\delta)^{i_0}}\|\hat{\z}-\z_0\|$. 
		\State $\tau_0 = \tau_{-1}(1-\delta)^{i_0}$, $\z_1 = \z_0 - \tau_0\F\z_0$.
		\For{$k=1,2,\ldots$}
		\State $i_k=j_k=0$, $\text{searching}=\text{True}$
		\While{$\text{searching}=\text{True}$}
		\State $\text{searching}=\text{False}$, $\tau_k = \tau_{k-1}(1-\delta)^{i_k}$, $\eta_k = \eta_{k-1}(1-\delta)^{j_k}$
		\begin{gather*}
		\z_{k+1/2} = \z_k + \frac{1}{k+1}(\z_0-\z_k) - \Big(1-\frac{1}{k+1}\Big)\eta_k \F\z_k \quad\text{and}\\
		\z_{k+1} = \z_k + \frac{1}{k+1}(\z_0-\z_k) - \tau_k\F\z_{k+1/2} - \Big(1-\frac{1}{k+1}\Big)(\eta_k-\tau_k)\F\z_k
		\end{gather*}
		\If{$\|\F\z_{k+1}-\F\z_{k+1/2}\| > \frac{1}{\tau_k}\|\z_{k+1}-\z_{k+1/2}\|$}
		\State $i_k \leftarrow i_k+1$
		\State $\text{searching}=\text{True}$
		\EndIf
		\If{$\inprod{\F\z_{k+1}-\F\z_k}{\z_{k+1}-\z_k} < \frac{\eta_k-\tau_k}{2}\|\F\z_{k+1}-\F\z_k\|^2$}
		\State $j_k \leftarrow j_k +1$
		\State $\text{searching}=\text{True}$
		\EndIf
		\EndWhile
		\EndFor
	\end{algorithmic}
\end{algorithm}
}
\comment{
To guarantee that the FEG-A is well-defined, the total numbers of updates for  $\{\tau_k\}_{k\ge -1}$ and $\{\eta_k\}_{k\ge 0}$ in FEG-A, which corresponds to $\sum_{l=0}^\infty i_l$ and $\sum_{l=1}^\infty j_l$, respectively, must be finite numbers. In fact, there exist positive lower bounds for $\{\tau_k\}_{k\ge -1}$ and $\{\eta_k\}_{k\ge 0}$ in FEG-A by the following lemma.
By noting that $\tau_k = \tau_{-1}(1-\delta)^{\sum_{l=0}^k i_l}$ and $\eta_k = \eta_0(1-\delta)^{\sum_{l=1}^k j_l}$, the positive lower bounds for $\{\tau_k\}_{k\ge -1}$ and $\{\eta_k\}_{k\ge 0}$ implies that $\sum_{k=0}^\infty i_k<\infty$ and $\sum_{k=1}^\infty j_k<\infty$.
}
The following lemma shows that each of the 
nonincreasing
sequences $\{\tau_k\}_{k\ge 0}$ and $\{\eta_k\}_{k\ge 0}$ of the FEG-A 
has a positive lower bound, and thus FEG-A is well-defined\footnote{
This requires one to chooses $\tau_{-1}$
strictly greater than the unknown value $-2\rho$ when $\rho<0$.},
under the condition
$\rho>-\frac{\tau_k}{2}$. 
This condition for $\rho$ can be weaker 
than the condition 
$\rho>-\frac{1}{2L}$ of FEG, 
since the local Lipschitz parameter $\frac{1}{\tau_k}$
can be smaller than $L$.
This is another benefit of using a backtracking line-search
in FEG, over the standard FEG.
\begin{lemma}\label{lem:lower_bounds_of_stepsizes}
	For the $L$-Lipschitz and $\rho$-comonotone operator $\F$ 
	and a given constant $\delta\in(0,1)$,
	the step size $\tau_k$ 
	of FEG-A is lower bounded by a positive value $\underline{\tau}:=\min\big\{\tau_{-1},\frac{1-\delta}{L}\big\}$ for all $k\ge 0$,
	and if $\rho > -\frac{\tau_k}{2}$, 
	the step size $\eta_k$
	is lower bounded by a positive value
	$\min\big\{\eta_0, (1-\delta)\big(\tau_k+2\rho\big)\big\}$ for all $k\ge 1$.
\end{lemma}

The FEG-A method also has the following $\mathcal{O}(1/k^2)$ 
rate with respect to the squared gradient norm
in Theorem~\ref{thm:convergence_adaptive}, 
when $\rho>-\frac{\tau_k}{2}$. 
The proof is provided in Section~\ref{sec:conv_analysis} and 
Appendix~\ref{appx:feg_a_thm}.
\begin{theorem}\label{thm:convergence_adaptive}
	For the $L$-Lipschitz and $\rho$-comonotone operator $\F$ 
	and for any $\z_*\in\Z_*(\F)$, the sequence $\{\z_k\}_{k\ge 0}$ generated by FEG-A satisfies 
	\begin{align*}
	\|\F\z_k\|^2 \le \frac{4\|\z_0-\z_*\|^2}{\left((k-1)\eta_k + \tau_k + 2\rho \right)^2} 
	\end{align*}
	for all $k\ge 1$, if $\rho>-\frac{\tau_k}{2}$. 
\end{theorem}
This rate bound of FEG-A reduces to
that of FEG 
in Theorem~\ref{thm:convergence_deterministic},
when we choose $\tau_{-1} = \frac{1}{L}$
and $\eta_0 = \frac{1}{L} + 2\rho$ for FEG-A.

\comment{
\textbf{Remark.} To achieve $\|\F\z_k\|^2\le \epsilon$ using FEG-A, we need
\begin{align}\label{eq:aega_oracle_complexity}
\frac{4\|\z_0-\z_*\|}{\Big(\frac{1-\delta}{L}+2\rho\Big)(1-\delta)\sqrt{\epsilon}}-\frac{2\delta}{1-\delta}+2\log_{1-\delta}\frac{1-\delta}{L\tau_{-1}} + 2\log_{1-\delta}\frac{\frac{(1-\delta)^2}{L}+(1-\delta)2\rho}{\rho_0}
\end{align} number of gradient estimate.
Therefore, taking smaller $\delta$ makes the FEG-A get faster convergence rate and work for a wider region $\rho>-\frac{1-\delta}{2L}$, but it increases the number of gradient estimate for fitting parameters which corresponds to the last two terms of \eqref{eq:aega_oracle_complexity}.
}

\section{FEG under stochastic setting}\label{sec:stochastic}
When exactly computing
$\F\z$ is expensive in practice, 
one usually instead
consider its stochastic estimate for computational efficiency
(see, \eg,
\cite{hsieh:19:otc,juditsky:11:svi,mertikopoulos:19:omd,nemirovski:09:rsa,ryu:19:oao,song:20:ode,zhou:17:smd}). 
This section also considers using a stochastic oracle in FEG for smooth convex-concave problems. 
In specific, this section assumes that we only have access to a noisy saddle gradient oracle, $\tF\z_{k/2} = \F\z_{k/2} + \xi_{k/2}$, where $\{\xi_{k/2}\}_{k\ge 0}$ are independent random variables satisfying $\EE[\xi_{k/2}]=0$ and $\EE[\|\xi_{k/2}\|^2]= \sigma_{k/2}^2$ for all $k\ge 0$.
Under this setting, we study a stochastic first-order method, named stochastic fast extragradient (S-FEG) method, illustrated in Algorithm~\ref{alg:stoc}.



\begin{algorithm}[h]
	\caption{Stochastic fast extragradient (S-FEG) method}
	\label{alg:stoc}
	\begin{algorithmic}
		\State {\bf Input:} $\z_0\in\reals^d$, $L\in(0,\infty)$. 
		\For{$k=0,1,\ldots$}
		\vspace{-15px}
		\State
		\begin{align*}
		\z_{k+1/2} &= \z_k + \frac{1}{k+1}(\z_0-\z_k) - \Big(1-\frac{1}{k+1}\Big)\frac{1}{L}\tF\z_k \\ 
        \z_{k+1} &= \z_k + \frac{1}{k+1}(\z_0-\z_k) - \frac{1}{L}\tF\z_{k+1/2}
		\end{align*}
		\EndFor
	\end{algorithmic}
\end{algorithm}
\comment{
Under this setting, we consider the 
following stochastic FEG (S-FEG):
\begin{equation}\label{alg:stoc}\tag{S-FEG}
\begin{aligned}
\z_{k+1/2} &= \z_k + \frac{1}{k+1}(\z_0-\z_k) - \Big(1-\frac{1}{k+1}\Big)\frac{1}{L}\tF\z_k\\
\z_{k+1} &= \z_k + \frac{1}{k+1}(\z_0-\z_k) - \frac{1}{L}\tF\z_{k+1/2}.
\end{aligned}
\end{equation}
}
\comment{
As in the previous section, our analysis relies on the same potential function, $V_k = a_k\|\F\z_k\|^2 - b_k\inprod{\F\z_k}{\z_0-\z_k}$. The expectation of the potential function is no more nonincreasing, but we can give lower bound on $\EE[V_k]-\EE[V_{k+1}]$ consist of $\sigma_k^2$ and $\sigma_{k+1/2}^2$ by the following lemma.
\begin{lemma}\label{lem:potential_stoc}
	Let $\{\z_k\}_{k\ge 0}$ be the sequence obtained by \eqref{alg:stoc}.
	Assume that $\{\alpha_k\}_{k\ge 0}$ and $\{\beta_k\}_{k\ge 0}$ satisfy $\beta_0 = 1$, $\{\beta_k\}_{k\ge 1}\subseteq(0,1)$, $\alpha_0\in(0,\infty)$, $\alpha_k\in\Big(0,\frac{1}{L_k}\Big]$ for all $k\ge 1$, and
	\begin{align*}
	\frac{(1-\beta_{k+1})\alpha_{k+1}}{2\beta_{k+1}}
	\le \frac{\alpha_k}{2\beta_k}
	\end{align*}
	for all $k\ge 0$. Then 
	$ 
	V_k = a_k\|\F\z_k\|^2-b_k\inprod{\F\z_k}{\z_0-\z_k}
	$ 
	satisfies
	\begin{align*}
	    \EE[V_0]-\EE[V_1] &\ge - \Big(\frac{L^2\alpha_0^3}{2}+L\alpha_0^2\Big)\sigma_0^2 \qquad\text{and}\\
	\EE[V_k]-\EE[V_{k+1}] &\ge -\frac{b_k\alpha_k(1+2L\alpha_k)}{2\beta_k}\Big((1-\beta_k)\sigma_k^2 + \frac{1}{1-\beta_k}\sigma_{k+1/2}^2\Big)
	\qquad\text{for all $k\ge 1$}
	\end{align*}
	where $a_0 = \frac{\alpha_0(L_0^2\alpha_0^2-1)}{2}$, $b_0 = 0$, $b_1 = 1$,
	\begin{align*}
	a_k &= \frac{b_k(1-\beta_k)\alpha_k}{2\beta_k} \quad\text{for all $k\ge 1$, and}\\
	b_{k+1} &= \frac{b_k}{1-\beta_k} \quad\text{for all $k \ge 1$}.
	\end{align*}
\end{lemma}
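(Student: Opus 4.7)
The plan is to mirror the deterministic potential-decrement argument behind Theorems~\ref{thm:convergence_deterministic} and \ref{thm:convergence_adaptive}, and then carefully expand the stochastic perturbation $\tF\z = \F\z + \xi$ to track the noise terms that survive in $\EE[V_k] - \EE[V_{k+1}]$. The given coefficient recursions $a_k = b_k(1-\beta_k)\alpha_k/(2\beta_k)$ and $b_{k+1} = b_k/(1-\beta_k)$, together with the monotonicity-type hypothesis $(1-\beta_{k+1})\alpha_{k+1}/(2\beta_{k+1}) \le \alpha_k/(2\beta_k)$, are precisely the ones that force the \emph{deterministic} part of $V_k - V_{k+1}$ to be nonnegative in the noiseless case, so the task reduces to quantifying the noise correction.

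For the general step $k \ge 1$, I would substitute the \eqref{alg:stoc} updates into $V_k - V_{k+1}$, write $\F\z_{k+1} = \F\z_{k+1/2} + (\F\z_{k+1}-\F\z_{k+1/2})$, and expand both $\|\F\z_{k+1}\|^2$ and the anchor inner product $\inprod{\F\z_{k+1}}{\z_0-\z_{k+1}}$. Regrouping yields a deterministic piece matching the noiseless calculation (hence $\ge 0$ under the step-size conditions), plus stochastic corrections. Taking conditional expectation given the history up to $\z_k$, every linear-in-$\xi$ term vanishes by $\EE[\xi_{k/2}] = 0$ and the independence assumption, leaving only quadratic-in-$\xi$ contributions. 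The crucial quantitative step is to bound $\|\F\z_{k+1} - \F\z_{k+1/2}\| \le L\|\z_{k+1}-\z_{k+1/2}\| = L\alpha_k\|(1-\beta_k)\tF\z_k - \tF\z_{k+1/2}\|$ via Lipschitz continuity; squaring and taking expectation produces the factor $2L\alpha_k$ that combines with the base $\alpha_k$ prefactor into the $\alpha_k(1+2L\alpha_k)$ coefficient in the statement. Finally, a Young-type split with weight tuned to $1-\beta_k$ on the mixed $\xi_k$--$\xi_{k+1/2}$ contribution yields exactly the asymmetric variance split $(1-\beta_k)\sigma_k^2 + \frac{1}{1-\beta_k}\sigma_{k+1/2}^2$, scaled by the overall potential prefactor $b_k/(2\beta_k)$.

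The boundary case $k = 0$ is handled separately because $\beta_0 = 1$ collapses the extrapolation step: $\z_{1/2} = \z_0$ and $\z_1 = \z_0 - \alpha_0\tF\z_0 = \z_0 - \alpha_0(\F\z_0 + \xi_0)$. With $b_0 = 0$ and $b_1 = 1$, one obtains $V_0 - V_1 = a_0\|\F\z_0\|^2 - a_1\|\F\z_1\|^2 + \alpha_0\inprod{\F\z_1}{\F\z_0 + \xi_0}$. I would expand $\|\F\z_1\|^2 = \|\F\z_0\|^2 + 2\inprod{\F\z_0}{\F\z_1 - \F\z_0} + \|\F\z_1 - \F\z_0\|^2$, use Lipschitz in the form $\|\F\z_1 - \F\z_0\| \le L\alpha_0\|\tF\z_0\|$, and take expectation. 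Linear-in-$\xi_0$ terms drop, and the surviving quadratic noise contributions amount to $L\alpha_0^2\sigma_0^2$ from the anchor inner-product combined with Cauchy--Schwarz and Lipschitz, and $\tfrac{1}{2}L^2\alpha_0^3\sigma_0^2$ from the squared Lipschitz bound, assembling into exactly $-(\tfrac{L^2\alpha_0^3}{2} + L\alpha_0^2)\sigma_0^2$.

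The main obstacle will be the bookkeeping in the general step: multiple cross terms between $\xi_k$, $\xi_{k+1/2}$, $\F\z_k$, and $\F\z_{k+1/2}$ arise from expanding both the squared norm and the anchor inner product, and the tight coefficient $\frac{b_k\alpha_k(1+2L\alpha_k)}{2\beta_k}$ is recovered only by carefully tracking which of these terms are absorbed back into the nonnegative deterministic piece (via the coefficient recursions) and which must be retained as noise. The delicate Young-type weight aligned with $1-\beta_k$ is the choice that reproduces the asymmetric variance coefficients in the claimed bound; any other weight would leave an expression that cannot be reduced to the stated form.
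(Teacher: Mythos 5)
Your overall architecture is right (mirror the deterministic potential argument, isolate the noise corrections), and your treatment of the $k=0$ step is essentially the paper's. But there is a genuine gap in the general step $k\ge 1$: the claim that ``every linear-in-$\xi$ term vanishes by $\EE[\xi_{k/2}]=0$ and independence'' is false for the terms that actually matter. Expanding the anchor inner product $b_{k+1}\inprod{\F\z_{k+1}}{\z_0-\z_{k+1}}$ and the update relations produces the cross terms $\inprod{\F\z_{k+1/2}}{\xi_k}$ and $\inprod{\F\z_{k+1}}{\xi_{k+1/2}}$, and these do \emph{not} have zero expectation, because $\z_{k+1/2}$ is a function of $\xi_k$ and $\z_{k+1}$ is a function of $\xi_{k+1/2}$; conditioning on the history up to $\z_k$ does not help. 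The paper isolates exactly these terms and bounds them in a separate lemma (Lemma~\ref{lem:bounded_exp}) by subtracting the gradient at the \emph{noiseless predicted} iterate --- e.g.\ $\EE[\inprod{\F\z_{k+1/2}-\F(\z_k+\beta_k(\z_0-\z_k)-(1-\beta_k)\alpha_k\F\z_k)}{\xi_k}]$, where the subtracted point is independent of $\xi_k$ so the compensating term has zero mean --- and then applying Cauchy--Schwarz and Lipschitz continuity to get $L(1-\beta_k)\alpha_k\sigma_k^2$ and $L\alpha_k\sigma_{k+1/2}^2$. These two bounds are precisely the source of the $2L\alpha_k$ in the coefficient $\alpha_k(1+2L\alpha_k)$; the ``$1$'' comes from $\EE[\|\F\z_{k/2}\|^2-\|\tF\z_{k/2}\|^2]=-\sigma_{k/2}^2$. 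Your alternative attribution of $2L\alpha_k$ to ``squaring the Lipschitz bound'' cannot work: squaring produces $L^2\alpha_k^2$ factors, not a term linear in $L\alpha_k$, and in the paper the squared Lipschitz inequality is spent entirely on making the deterministic remainder $\frac{b_k(1-L^2\alpha_k^2)}{2L^2\alpha_k\beta_k(1-\beta_k)}\|\F\z_{k+1}-\F\z_{k+1/2}\|^2$ nonnegative.

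A secondary inaccuracy: no Young-type split is needed for the asymmetric weights $(1-\beta_k)\sigma_k^2+\frac{1}{1-\beta_k}\sigma_{k+1/2}^2$. There is no surviving $\xi_k$--$\xi_{k+1/2}$ cross term (independence kills it directly), and the weights $1-\beta_k$ and $\frac{1}{1-\beta_k}$ simply inherit the coefficients $\frac{b_k(1-\beta_k)\alpha_k}{2\beta_k}$ and $\frac{b_k\alpha_k}{2\beta_k(1-\beta_k)}$ that multiply the full-step and half-step variance contributions in the potential decomposition. To repair the proof, you should state and prove the cross-term bounds of Lemma~\ref{lem:bounded_exp} explicitly and carry them through; note that your own $k=0$ computation already uses exactly this mechanism (the $L\alpha_0^2\sigma_0^2$ term ``from the anchor inner product combined with Cauchy--Schwarz and Lipschitz''), so the fix is to apply the same device at the half-step and full-step of every iteration rather than asserting that those terms vanish.
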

}

The following theorem  
provides
an upper bound of the expected squared gradient norm for the S-FEG.
(See Appendix~\ref{appx:s_feg_thm} for the proof.)
\begin{theorem}\label{thm:convergence_stoc}
	Let $\tF\z_{k/2} = \F\z_{k/2} + \xi_{k/2}$, where $\{\xi_{k/2}\}_{k\ge 0}$ are independent random variables satisfying $\EE[\xi_{k/2}]=0$ and $\EE[\|\xi_{k/2}\|^2]= \sigma_{k/2}^2$ for all $k\ge 0$. Then, for the $L$-Lipschitz continuous and monotone operator $F$ and for any $\z_*\in\Z_*(\F)$, the sequence $\{\z_k\}_{k\ge 0}$ generated by S-FEG satisfies
	\begin{align}\label{eq:stoc_ub}
	\EE[\|\F\z_k\|^2]\le \frac{4L^2\|\z_0-\z_*\|^2}{k^2} + \frac{6}{k^2}\left[\sigma_0^2 + \sum_{l=1}^{k-1}(l^2\sigma_l^2 + (l+1)^2\sigma_{l+1/2}^2)\right]
	\end{align}
	for all $k\ge 1$. Furthermore, if 
	$\sigma_0^2 \le \frac{\epsilon}{6}$,
	$\sigma_k^2\le \frac{\epsilon}{6k}$ and $\sigma_{k+1/2}^2\le \frac{\epsilon}{6(k+1)}$ for all $k\ge 1$, then the bound \eqref{eq:stoc_ub} reduces to
	\begin{align*}
	\EE[\|\F\z_k\|^2]\le \frac{4L^2\|\z_0-\z_*\|^2}{k^2} + \epsilon
	\end{align*}
	for all $k\ge 1$.
\end{theorem}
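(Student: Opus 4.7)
The plan is to carry out a Lyapunov-function argument parallel to the one that certifies Theorem~\ref{thm:convergence_deterministic} at $\rho = 0$, then account for the stochastic perturbations by a careful noise bookkeeping. I would introduce the potential
\[V_k := a_k\|\F\z_k\|^2 - b_k\inprod{\F\z_k}{\z_0 - \z_k},\qquad a_k = \tfrac{k^2}{2L},\ b_k = k \text{ for } k\ge 1,\quad V_0 := 0,\]
which are the weights suggested by the deterministic analysis of FEG with $\alpha_k = 1/L$ and $\beta_k = 1/(k+1)$. The one-step goal is to show
\[\EE[V_k] - \EE[V_{k+1}] \ge -\tfrac{3}{2L}\bigl(k^2\sigma_k^2 + (k+1)^2\sigma_{k+1/2}^2\bigr)\quad (k\ge 1),\qquad \EE[V_0]-\EE[V_1]\ge -\tfrac{3}{2L}\sigma_0^2.\]
This follows by substituting the S-FEG update into $V_{k+1}$, splitting $\tF\z_{k/2} = \F\z_{k/2} + \xi_{k/2}$, and taking conditional expectation given the history through iteration $k$. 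Since $\xi_k$ and $\xi_{k+1/2}$ are mean-zero and independent of the past and of each other, all terms linear in either noise vanish, leaving exactly the deterministic drift (which is nonnegative by the proof of Theorem~\ref{thm:convergence_deterministic}) plus a quadratic-in-noise residual that simplifies to the stated expression.

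Telescoping from $k = 0$ to $K-1$ then yields $\EE[V_K] \le \tfrac{3}{2L}\bigl[\sigma_0^2 + \sum_{l=1}^{K-1}(l^2\sigma_l^2 + (l+1)^2\sigma_{l+1/2}^2)\bigr]$. For the lower bound on $V_K$, I would use monotonicity together with $\F\z_* = \zero$ to get $\inprod{\F\z_K}{\z_0 - \z_K} \le \inprod{\F\z_K}{\z_0 - \z_*} \le \|\F\z_K\|\,\|\z_0 - \z_*\|$, then Young's inequality with splitting $K\|\F\z_K\|\,\|\z_0-\z_*\| \le \tfrac{K^2}{4L}\|\F\z_K\|^2 + L\|\z_0-\z_*\|^2$ to conclude
\[V_K \ge \tfrac{K^2}{4L}\|\F\z_K\|^2 - L\|\z_0 - \z_*\|^2.\]
Taking expectations, combining with the telescoped upper bound, and multiplying through by $4L/K^2$ produces exactly~\eqref{eq:stoc_ub}. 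For the second assertion, plug the hypothesized decay of variances into the noise sum: each summand is at most $\epsilon(2l+1)/6$, so $\sum_{l=1}^{K-1}(2l+1) = K^2 - 1$ gives a total noise contribution (including the $\sigma_0^2 \le \epsilon/6$ term) of at most $\epsilon$.

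The main obstacle will be producing the clean one-step drift inequality with the precise coefficients $k^2$ and $(k+1)^2$. Because S-FEG reuses $\tF\z_k$ inside the $\z_{k+1/2}$ update and then uses $\tF\z_{k+1/2}$ inside $\z_{k+1}$, every quadratic-in-$\F\z_{k+1}$ term entering $V_{k+1}$ couples both noises; in particular the Lipschitz-type inner product $\inprod{\F\z_{k+1}-\F\z_{k+1/2}}{\z_{k+1}-\z_{k+1/2}}$ used in the deterministic proof must be re-examined so that, after conditioning, no mixed-moment contribution $\sigma_k\sigma_{k+1/2}$ leaks into the residual. A secondary subtlety is the first iteration $k=0$, where $\beta_0 = 1$ collapses $\z_{1/2} = \z_0$ and the potential weights degenerate to $a_0 = b_0 = 0$; this boundary case must be treated separately and is exactly what produces the isolated $\sigma_0^2$ term in~\eqref{eq:stoc_ub}.
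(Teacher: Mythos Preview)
Your overall architecture --- the same potential $V_k$, telescoping, then monotonicity plus Young's inequality --- matches the paper exactly, and both the telescoped target and the final algebra are correct. The gap is in your justification of the one-step drift.

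The claim that ``all terms linear in either noise vanish'' is false as stated. Because $\z_{k+1/2} = \z_k + \beta_k(\z_0-\z_k) - (1-\beta_k)\alpha_k(\F\z_k + \xi_k)$, the quantity $\F\z_{k+1/2}$ is \emph{not} independent of $\xi_k$; likewise $\F\z_{k+1}$ is not independent of $\xi_{k+1/2}$. Hence the inner products $\EE[\inprod{\F\z_{k+1/2}}{\xi_k}]$ and $\EE[\inprod{\F\z_{k+1}}{\xi_{k+1/2}}]$ do \emph{not} vanish under conditioning; they must instead be bounded. In fact these non-vanishing cross terms account for two thirds of your constant $3/(2L)$: the genuinely quadratic $\EE[\|\xi\|^2]$ contributions by themselves give only $1/(2L)$, so an argument that really did kill all linear-in-noise terms would land on the wrong constant.

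The paper closes this gap with a separate lemma (Lemma~\ref{lem:bounded_exp}): one writes, for instance,
\[
\EE[\inprod{\F\z_{k+1/2}}{\xi_k}] = \EE\bigl[\inprod{\F\z_{k+1/2} - \F\bar{\z}_{k+1/2}}{\xi_k}\bigr],
\]
where $\bar{\z}_{k+1/2} := \z_k + \beta_k(\z_0-\z_k) - (1-\beta_k)\alpha_k\F\z_k$ is the noiseless half-step (so $\F\bar{\z}_{k+1/2}$ is independent of $\xi_k$ given the past and the linear piece really does vanish), and then uses $L$-Lipschitz continuity together with $\|\z_{k+1/2} - \bar{\z}_{k+1/2}\| = (1-\beta_k)\alpha_k\|\xi_k\|$ and Cauchy--Schwarz to get $|\EE[\inprod{\F\z_{k+1/2}}{\xi_k}]| \le L(1-\beta_k)\alpha_k\sigma_k^2$. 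The analogous bound for $\EE[\inprod{\F\z_{k+1}}{\xi_{k+1/2}}]$ gives $L\alpha_k\sigma_{k+1/2}^2$, and similarly $|\EE[\inprod{\F\z_1}{\xi_0}]|\le L\alpha_0\sigma_0^2$ handles the $k=0$ case. With these bounds in hand, your stated drift inequality follows with exactly the coefficient $3/(2L)$. Your worry about a mixed $\sigma_k\sigma_{k+1/2}$ contribution turns out not to be the real issue --- no such term survives once the cross terms are isolated this way.
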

Here,
we needed the noise variance $\sigma_{k/2}^2$ to decrease in the order of $\mathcal{O}(1/k)$ so that 
the stochastic error of the S-FEG 
does not accumulate.
Otherwise, 
if $\sigma_{k/2}^2$ is a constant for all $k$, 
the error accumulates with rate $\mathcal{O}(k)$.
In short, the S-FEG will suffer from error accumulation, 
unless the stochastic error decreases with rate $\mathcal{O}(1/k)$.
Such error accumulation behavior 
also appears
in a stochastic version of Nesterov's fast gradient method
\cite{nesterov:83:amf,nesterov:05:smo} for smooth convex minimization
\cite{devolder:11:sfo,ghadimi:16:agm}.
Similar to~\cite{devolder:11:sfo},
we believe that 
adjusting the step coefficients of the S-FEG
can make the S-FEG become relatively stable
even with a constant noise,
which we leave as future work.
%


\comment{
\begin{theorem}\label{thm:convergence_stoc}
	For the $L$-Lipschitz continuous and monotone operator $F$, let $\sigma_k^2\le \frac{\epsilon}{6k}$ and $\sigma_{k+1/2}^2\le \frac{\epsilon}{6(k+1)}$ for all $k=0,1,\ldots$. Then the sequence $\{\z_k\}_{k\ge 0}$ obtained by FEG satisfies
	\begin{align*}
	\EE[\|\F\z_k\|^2]\le \frac{4L^2\|\z_0-\z_*\|^2}{k^2} + \epsilon
	\end{align*}
	for all $k\ge 1$.
\end{theorem}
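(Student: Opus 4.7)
The plan is to adapt the Lyapunov-function analysis used for the deterministic FEG (Theorem~\ref{thm:convergence_deterministic}) to the stochastic setting, carefully tracking the noise terms introduced by the oracle $\tF\z_{k/2}$. I would use the same shape of potential as in the deterministic case, namely
\[
V_k \;=\; a_k\|\F\z_k\|^2 \;-\; b_k\inprod{\F\z_k}{\z_0-\z_k},
\]
with $a_k = \Theta(k^2/L)$ and $b_k = \Theta(k)$ chosen so that the deterministic telescoping identity holds. In the deterministic proof one shows $V_k$ is nonincreasing and then uses the monotonicity $\inprod{\F\z_k}{\z_k-\z_*}\ge 0$ together with $\F\z_*=\zero$ to absorb the linear term, yielding $\|\F\z_k\|^2 \le \frac{4L^2\|\z_0-\z_*\|^2}{k^2}$.

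First, I would derive a stochastic analogue of the one-step potential inequality. Letting $\mathcal{F}_k$ denote the filtration generated by $\xi_0,\xi_{1/2},\ldots,\xi_{(k-1)+1/2}$, I would condition on $\mathcal{F}_k$ and expand $\|\F\z_{k+1}\|^2$ and $\inprod{\F\z_{k+1}}{\z_0-\z_{k+1}}$ around their noise-free counterparts. The identities
\[
\z_{k+1/2} - \z_{k+1/2}^{\mathrm{det}} \;=\; -\Bigl(1-\tfrac{1}{k+1}\Bigr)\tfrac{1}{L}\xii_k,
\qquad
\z_{k+1} - \z_{k+1}^{\mathrm{det}} \;=\; -\tfrac{1}{L}\xii_{k+1/2} - \Bigl(1-\tfrac{1}{k+1}\Bigr)\tfrac{1}{L}\xii_k
\]
(where the ``det'' sequences are what S-FEG would produce with exact gradients from $\z_k$) let me write the per-step change as the deterministic decrease (nonpositive by the argument in Theorem~\ref{thm:convergence_deterministic}) plus stochastic cross terms and quadratic noise terms. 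Using $\EE[\xii_{k/2}\mid\mathcal{F}_k]=\zero$ and $\EE[\|\xii_{k/2}\|^2\mid\mathcal{F}_k]\le\sigma_{k/2}^2$, together with $L$-Lipschitzness of $\F$ to bound $\|\F\z_{k+1}-\F\z_{k+1/2}\|$ and $\|\F\z_{k+1/2}-\F\z_k\|$, the remaining cross terms are killed in conditional expectation and the quadratic noise terms are dominated by $C\cdot b_k(\sigma_k^2+\sigma_{k+1/2}^2)/L$ for some absolute constant.

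Next, I would telescope: summing the inequality $\EE[V_{k+1}] \le \EE[V_k] + \text{(noise terms)}$ from $0$ to $k-1$ yields
\[
\EE[V_k] \;\le\; V_0 \;+\; \tfrac{C}{L}\Bigl[\sigma_0^2 \;+\; \sum_{l=1}^{k-1}\bigl(l^2\sigma_l^2 + (l+1)^2\sigma_{l+1/2}^2\bigr)\Bigr],
\]
where the weights $l^2$ and $(l+1)^2$ come from the coefficients $a_l, b_l$ multiplying $\sigma_l^2$ and $\sigma_{l+1/2}^2$ (since $b_k=\Theta(k)$ and the noise enters with an additional factor from the step size). Applying monotonicity to discard $-b_k\inprod{\F\z_k}{\z_0-\z_k}$ as in the deterministic case and using $V_0 = O(L\|\z_0-\z_*\|^2)$ via the initial step, then dividing by $a_k = \Theta(k^2/L)$ gives the claimed bound~\eqref{eq:stoc_ub} with the constants $4$ and $6$ (which one obtains by tracking the exact coefficients, matching the deterministic constant $4$). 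The second statement of the theorem is then immediate: plugging in $\sigma_0^2 \le \epsilon/6$, $\sigma_l^2 \le \epsilon/(6l)$, and $\sigma_{l+1/2}^2 \le \epsilon/(6(l+1))$ makes $l^2\sigma_l^2 \le l\epsilon/6$ and $(l+1)^2\sigma_{l+1/2}^2 \le (l+1)\epsilon/6$, so the bracketed sum is at most $k^2\epsilon/6$, and the $6/k^2$ prefactor converts this to $+\epsilon$.

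The main obstacle will be the bookkeeping for the noise cross terms, since $\z_{k+1/2}$ depends on $\xii_k$ and $\z_{k+1}$ depends on both $\xii_k$ and $\xii_{k+1/2}$; this means, for instance, $\EE[\inprod{\F\z_{k+1}}{\xii_{k+1/2}}]\ne 0$ in general, and one must rewrite $\F\z_{k+1}$ as $\F\z_k + (\F\z_{k+1}-\F\z_k)$ and use Lipschitz continuity together with Young's inequality to bound the resulting terms by noise variances. A second, more delicate point is ensuring that the coefficients that appear in front of $\sigma_l^2$ and $\sigma_{l+1/2}^2$ after telescoping land exactly at $l^2$ and $(l+1)^2$ (up to the universal constant $6$); matching this requires pairing each quadratic noise contribution to the correct $a_l$ or $b_l$, which I would verify by induction on $k$ using the recursions $a_{k+1}-a_k$, $b_{k+1}-b_k$ inherited from the deterministic proof.
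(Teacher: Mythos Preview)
Your overall strategy matches the paper's: same potential $V_k=a_k\|\F\z_k\|^2-b_k\inprod{\F\z_k}{\z_0-\z_k}$ with $a_k=\tfrac{k^2}{2L}$, $b_k=k$, a one-step bound on $\EE[V_k]-\EE[V_{k+1}]$ in terms of $\sigma_k^2,\sigma_{k+1/2}^2$, then telescoping and applying monotonicity. Two places deserve correction.

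\emph{Cross terms.} Your plan to write $\F\z_{k+1}=\F\z_k+(\F\z_{k+1}-\F\z_k)$ and use Young's inequality does not close cleanly: $\|\F\z_{k+1}-\F\z_k\|\le L\|\z_{k+1}-\z_k\|$ contains non-noise terms (it depends on $\F\z_{k+1/2}$ and $\z_0-\z_k$), so after Young you would have to reabsorb an extra $\|\F\z_{k+1/2}\|^2$-type term into the potential, which complicates the constants and the induction. The paper's device is sharper: for $\EE[\inprod{\F\z_{k+1}}{\xii_{k+1/2}}]$, subtract $\F$ evaluated at the \emph{noise-free} update $\z_k+\beta_k(\z_0-\z_k)-\alpha_k\F\z_{k+1/2}$ (which has zero correlation with $\xii_{k+1/2}$), so that the remaining difference is exactly $L\alpha_k\|\xii_{k+1/2}\|$ by Lipschitzness, giving $|\EE[\inprod{\F\z_{k+1}}{\xii_{k+1/2}}]|\le L\alpha_k\sigma_{k+1/2}^2$ with no Young step. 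The same trick handles $\EE[\inprod{\F\z_{k+1/2}}{\xii_k}]$. This is what produces the clean weights $l^2$ and $(l+1)^2$ and the constant $6$.

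\emph{Final step.} You cannot proceed ``as in the deterministic case'': the deterministic proof divides both sides by $\|\F\z_k\|$, which does not commute with expectation. Also $V_0=0$ here (since $a_0=\tfrac{\alpha_0(L^2\alpha_0^2-1)}{2}=0$ and $b_0=0$), so it is not the source of the $\|\z_0-\z_*\|^2$ term. The paper instead uses monotonicity to pass from $\inprod{\F\z_k}{\z_0-\z_k}$ to $\inprod{\F\z_k}{\z_0-\z_*}$ and then applies Young's inequality
\[
k\inprod{\F\z_k}{\z_0-\z_*}\;\le\;\frac{k^2}{4L}\|\F\z_k\|^2+L\|\z_0-\z_*\|^2,
\]
moving the first term to the left to obtain $\EE\big[\tfrac{k^2}{4L}\|\F\z_k\|^2\big]\le L\|\z_0-\z_*\|^2+\sigma_{\mathrm{total}}^2$. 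This is where both the constant $4$ and the $\|\z_0-\z_*\|^2$ term come from. Once you make these two fixes, your argument coincides with the paper's.
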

}

\section{Convergence analysis with nonincreasing potential lemma}\label{sec:conv_analysis}
We analyze FEG and FEG-A by
finding a nonincreasing potential function in a form $V_k=a_k\|\F\z_k\|^2-b_k\inprod{\F\z_k}{\z_0-\z_k}$
in the lemma below.
We provide a similar potential lemma for S-FEG
in Appendix~\ref{appx:s_feg_pot}.
The convergence analyses of 
EAG and Halpern iteration
are also based on such potential function
\cite{diakonikolas:20:hif,yoon:21:aaf}. 

\begin{lemma}\label{lem:potential}
	Let $\{\z_k\}_{k\ge 0}$ be the sequence generated by \eqref{alg:special}
	with $\{\alpha_k\}_{k\ge 0}$, $\{\beta_k\}_{k\ge 0}$,
	    $\{L_k\}_{k\ge 0}\subset (0,\infty)$ and $\{\rho_k\}_{k\ge 0}\subset\reals$,
	   satisfying $\alpha_0\in(0,\infty)$, $\alpha_k\in\big(0,\frac{1}{L_k}\big]$, $\beta_0 = 1$, $\{\beta_k\}_{k\ge 1}\subseteq(0,1)$ for all $k\ge 1$, and
	\begin{align*}
	\frac{(1-\beta_{k+1})}{2\beta_{k+1}}(\alpha_{k+1}+2\rho_{k+1})-\rho_{k+1}
	\le \frac{1}{2\beta_k}(\alpha_k+2\rho_k)-\rho_k
	\end{align*}
	for all $k\ge 0$. 
	Assume that 
	the following conditions are satisfied.
	\begin{align*}
	\|\F\z_1-\F\z_0\| &\le L_0\|\z_1-\z_0\| \\
	\|\F\z_{k+1}-\F\z_{k+1/2}\| &\le L_k \|\z_{k+1}-\z_{k+1/2}\| \quad\text{for all $k\ge 1$}, 
	\\
	\inprod{\F\z_{k+1}-\F\z_k}{\z_{k+1}-\z_k} &\ge \rho_k\|\F\z_{k+1}-\F\z_k\|^2 \quad\text{for all $k\ge 1$.}
	\end{align*}
Then the potential function
	\begin{align*}
	V_k = a_k\|\F\z_k\|^2-b_k\inprod{\F\z_k}{\z_0-\z_k}
	\end{align*}
	with $a_0 = \frac{\alpha_0(L_0^2\alpha_0^2-1)}{2}$, $b_0 = 0$, $b_1 = 1$,
	\begin{align*}
	a_k = \frac{b_k(1-\beta_k)}{2\beta_k}(\alpha_k+2\rho_k) - b_k\rho_k \quad\text{and}\quad
	b_{k+1} = \frac{b_k}{1-\beta_k}
	\end{align*}
	for all $k\ge 1$ satisfies $V_k\le V_{k-1}$ for all $k\ge 1$.
\end{lemma}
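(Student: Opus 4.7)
The plan is to prove $V_k \le V_{k-1}$ by direct algebraic manipulation, combining the update rule of \eqref{alg:special} with the Lipschitz and comonotonicity hypotheses; the argument follows the same potential-function template as the analyses of EAG and Halpern iteration in \cite{yoon:21:aaf,diakonikolas:20:hif}, but accommodates the additional $\rho_k\F\z_k$ correction term appearing in \eqref{alg:special}.

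First I would dispose of the base case $V_1\le V_0$. Since $\beta_0=1$, the first iteration collapses to $\z_{1/2}=\z_0$ and $\z_1=\z_0-\alpha_0\F\z_0$. Using $b_0=0$, $b_1=1$, and $\z_0-\z_1=\alpha_0\F\z_0$, the difference $V_0-V_1$ reduces to an expression purely in $\F\z_0$ and $\F\z_1$, which is shown nonnegative via $\|\F\z_1-\F\z_0\|^2\le L_0^2\alpha_0^2\|\F\z_0\|^2$ together with the choice $a_0=\alpha_0(L_0^2\alpha_0^2-1)/2$ and the formula for $a_1$. This fixes the constants at the initial step so that the inductive argument can take over.

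For the inductive step with $k\ge 1$, I would substitute $\z_0-\z_{k+1}=(1-\beta_k)(\z_0-\z_k)+\alpha_k\F\z_{k+1/2}+(1-\beta_k)2\rho_k\F\z_k$ into $V_{k+1}$. The key recurrence $b_{k+1}(1-\beta_k)=b_k$ collapses the ``anchor'' term $b_{k+1}(1-\beta_k)\inprod{\F\z_{k+1}}{\z_0-\z_k}$ with $-b_k\inprod{\F\z_k}{\z_0-\z_k}$ into $b_k\inprod{\F\z_k-\F\z_{k+1}}{\z_0-\z_k}$, and the quantity $\inprod{\F\z_k-\F\z_{k+1}}{\z_0-\z_k}$ can then be rewritten using the update rule for $\z_{k+1}-\z_k$ so that the anchor vector $\z_0-\z_k$ disappears up to a factor absorbed into the potential. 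The crucial algebraic identity is
\begin{equation*}
\z_{k+1}-\z_{k+1/2}=\alpha_k\bigl((1-\beta_k)\F\z_k-\F\z_{k+1/2}\bigr),
\end{equation*}
which, combined with $\alpha_k L_k\le 1$, upgrades the Lipschitz hypothesis to $\|\F\z_{k+1}-\F\z_{k+1/2}\|^2\le\|(1-\beta_k)\F\z_k-\F\z_{k+1/2}\|^2$. I would then add to $V_k-V_{k+1}$ a nonnegative multiple of this Lipschitz slack and a nonnegative multiple of the comonotonicity slack $\inprod{\F\z_{k+1}-\F\z_k}{\z_{k+1}-\z_k}-\rho_k\|\F\z_{k+1}-\F\z_k\|^2$, with the multipliers calibrated so that the recurrences $a_k=\tfrac{b_k(1-\beta_k)}{2\beta_k}(\alpha_k+2\rho_k)-b_k\rho_k$ and $b_{k+1}=b_k/(1-\beta_k)$ produce exact cancellations of all cross inner products and of the terms containing $\z_0-\z_k$.

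The main obstacle will be choosing the two slack multipliers so that the residual, after cancellation, is a manifestly nonnegative quadratic form in $(\F\z_k,\F\z_{k+1/2},\F\z_{k+1})$. I expect this residual to factor as a (possibly scaled) squared norm of the form $\bigl\|\F\z_{k+1}-\F\z_{k+1/2}-c(\F\z_k-\F\z_{k+1/2})\bigr\|^2$ for some $c$ depending on $\alpha_k,\beta_k,\rho_k$, and that the step-size hypothesis
\begin{equation*}
\tfrac{1-\beta_{k+1}}{2\beta_{k+1}}(\alpha_{k+1}+2\rho_{k+1})-\rho_{k+1}\le\tfrac{1}{2\beta_k}(\alpha_k+2\rho_k)-\rho_k
\end{equation*}
is precisely what guarantees that the coefficient of this square is nonnegative. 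Once this SOS decomposition is identified, the lemma follows by induction on $k$, and the stated recurrences for $a_k,b_k$ emerge naturally as the unique choice making the argument work.
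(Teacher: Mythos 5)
Your plan follows the paper's proof essentially verbatim: the same base-case computation using $\beta_0=1$ and the $L_0$-inequality, the same key identity $\z_{k+1}-\z_{k+1/2}=\alpha_k((1-\beta_k)\F\z_k-\F\z_{k+1/2})$, the same device of adding two nonnegative slack multiples (the paper uses $\tfrac{b_k}{\beta_k}$ for the comonotonicity slack and $\tfrac{b_k}{2L_k^2\alpha_k\beta_k(1-\beta_k)}$ for the Lipschitz slack), and the same final residual, which the paper obtains as $\tfrac{b_k(1-L_k^2\alpha_k^2)}{2L_k^2\alpha_k\beta_k(1-\beta_k)}\|\F\z_{k+1}-\F\z_{k+1/2}\|^2$ (your square with $c=0$). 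One small misattribution: the nonnegativity of that coefficient comes from $\alpha_kL_k\le 1$ and $\beta_k\in(0,1)$, whereas the monotone step-coefficient hypothesis is instead what bounds $a_{k+1}\le\tfrac{b_k\alpha_k}{2\beta_k(1-\beta_k)}+\tfrac{b_k\rho_k}{\beta_k}$ so that the $\|\F\z_{k+1}\|^2$ terms complete the square.
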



Based on the above potential lemma,
we next provide a convergence analysis of FEG.
The analyses for the convergence rate of FEG-A and S-FEG, \ie, the proofs of Theorem~\ref{thm:convergence_adaptive} and Theorem~\ref{thm:convergence_stoc}, are similar to that of FEG and are provided in Appendix~\ref{appx:feg_a_thm} and Appendix~\ref{appx:s_feg_thm}.
\comment{
Since we use $\beta_k = \frac{1}{k+1}$ for FEG and FEG-A, we get
\begin{gather*}
b_k = \frac{1}{1-\beta_{k-1}}b_{k-1} = \Big(\prod_{i=1}^{k-1}\frac{1}{1-\beta_i}\Big)b_1 = k
\end{gather*}
for all $k\ge 0$ in Lemma~\ref{lem:potential} for the both cases.
}

\subsection{Convergence analysis for FEG}
\textit{Proof of Theorem~\ref{thm:convergence_deterministic}.}
	Recall that 
	FEG is equivalent to \eqref{alg:special} with $\alpha_k = \frac{1}{L}$, $\beta_k = \frac{1}{k+1}$, and $\rho_k = \rho$.
	It is straightforward to verify that the given $\{\alpha_k\}_{k\ge 0}$ and $\{\beta_k\}_{k\ge 0}$ satisfy the conditions in Lemma~\ref{lem:potential} with $L_k = L$ for all $k\ge 0$. Since
	\begin{gather*}
	a_k = \frac{b_k(1-\beta_k)}{2\beta_k}(\alpha_k+2\rho_k) - b_k\rho_k = \frac{k^2}{2}\Big(\frac{1}{L}+2\rho\Big)-k\rho \qquad\text{and}\\
	b_k = \frac{1}{1-\beta_{k-1}}b_{k-1} = \Big(\prod_{i=1}^{k-1}\frac{1}{1-\beta_i}\Big)b_1 = k,
	\end{gather*}
	Lemma~\ref{lem:potential} implies that
	\begin{align*}
	0=V_0\ge V_k = \left(\frac{k^2}{2}\Big(\frac{1}{L}+2\rho\Big)-k\rho\right)\|\F\z_k\|^2 - k\inprod{\F\z_k}{\z_0-\z_k}.
	\end{align*}
	Therefore,
	\begin{align*}
	\frac{k^2}{2}\Big(\frac{1}{L}+2\rho\Big)\|\F\z_k\|^2&\le k\inprod{\F\z_k}{\z_0-\z_k} + k\rho\|\F\z_k\|^2\\
	&= k\inprod{\F\z_k}{\z_0-\z_*} +k\inprod{\F\z_k}{\z_*-\z_k} + k\rho\|\F\z_k\|^2\\
	&\le k\inprod{\F\z_k}{\z_0-\z_*} \qquad(\because \text{$\rho$-comonotonicity of $\F$})\\
	&\le k\|\F\z_k\| \|\z_0-\z_*\|.
	\end{align*}
	The desired result follows directly by dividing both sides by $\frac{k^2}{2}\big(\frac{1}{L}+2\rho\big)\|\F\z_k\|$.
\qed

\comment{
\subsection{Convergence analysis for FEG-A}
\textit{Proof of Theorem~\ref{thm:convergence_adaptive}.}
	Note that 
	FEG-A is equivalent to \eqref{alg:special} with $\alpha_k = \tau_k$ and $\beta_k = \frac{1}{k+1}$.
	The given sequence in FEG-A satisfies the conditions in Lemma~\ref{lem:potential} with $L_k = \frac{1}{\tau_k}$ and $\rho_k = \frac{\eta_k-\tau_k}{2}$:
	\begin{align*}
	\frac{(1-\beta_{k+1})}{2\beta_{k+1}}(\alpha_{k+1}+2\rho_{k+1})-\rho_{k+1}= \frac{k}{2}\eta_{k+1} + \frac{1}{2}\tau_{k+1}\le \frac{k}{2}\eta_{k} + \frac{1}{2}\tau_{k}= \frac{1}{2\beta_k}(\alpha_k + 2\rho_k) - \rho_k
	\end{align*}
	where the inequality follows from the fact that $\{\tau_k\}_{k\ge 0}$ and $\{\eta_k\}_{k\ge 0}$ are nonincreasing sequences. Since
	\begin{gather*}
	a_k = \frac{b_k(1-\beta_k)}{2\beta_k}(\alpha_k+2\rho_k) - b_k\rho_k = \frac{k}{2}((k-1)\eta_k+\tau_k) \qquad\text{and}\\
	b_k = \frac{1}{1-\beta_{k-1}}b_{k-1} = \Big(\prod_{i=1}^{k-1}\frac{1}{1-\beta_i}\Big)b_1 = k,
	\end{gather*}
	Lemma~\ref{lem:potential} implies that
	\begin{align*}
	0 = V_1 \ge V_k = \frac{k}{2}((k-1)\eta_k+\tau_k)\|\F\z_k\|^2 - k\inprod{\F\z_k}{\z_0-\z_k}.
	\end{align*}
	Therefore, 
	\begin{align*}
	\frac{k}{2}((k-1)\eta_k+\tau_k+2\rho)\|\F\z_k\|^2 &\le k\inprod{\F\z_k}{\z_0-\z_k}+k\rho\|\F\z_k\|^2\\
	&= k\inprod{\F\z_k}{\z_0-\z_*} +k\inprod{\F\z_k}{\z_*-\z_k} +k\rho\|\F\z_k\|^2\\
	&\le k\inprod{\F\z_k}{\z_0-\z_*} \qquad(\because \text{$\rho$-comonotonicity of $\F$})\\
	&\le k\|\F\z_k\| \|\z_0-\z_*\|.
	\end{align*}
	Then by dividing both sides by $\frac{k}{2}((k-1)\eta_k+\tau_k+2\rho)\|\F\z_k\|$ and using Lemma~\ref{lem:lower_bounds_of_stepsizes}, we get
	\begin{align*}
	\|\F\z_k\|\le \frac{2\|\z_0-\z_*\|}{(k-1)\eta_k+\tau_k+2\rho}\le \frac{2\|\z_0-\z_*\|}{((k-1)(1-\delta)+1)\Big(\frac{1-\delta}{L}+2\rho\Big)}.
	\end{align*}
\qed

\subsection{Convergence analysis for S-FEG}
S-FEG also has the same form of potential function, $V_k=a_k\|\F\z_k\|^2-b_k\inprod{\F\z_k}{\z_0-\z_k}$, as that of FEG and FEG-A but with taking expectation on it. The proof of Theorem~\ref{thm:convergence_stoc} is built upon a potential lemma similar to that of Lemma~\ref{lem:potential}. We postpone the detailed proof of Theorem~\ref{thm:convergence_stoc} to Appendix~\ref{appx:s_feg_thm}.
}

\section{Discussion: first-order methods for Lipschitz continuous operators}
\label{sec:disc}

Throughout this paper, we studied and constructed efficient methods
in a class of first-order methods:
\begin{align*}
\z_k \in \z_0 + \text{span}\{\F\z_0,\cdots,\F\z_k\}
\end{align*}
denoted by $\mathcal{A}$,
for smooth structured nonconvex-nonconcave problems.
We observed that all existing first-order methods, including the FEG,
required an additional condition,
such as the negative comonoticity,
on a Lipschitz continuous $\F$
to guarantee convergence.
One would then be curious whether or not there exists
an (efficient) method in class $\mathcal{A}$
that guarantees convergence without any additional condition
on a Lipschitz continuous $\F$.
Unfortunately, the following lemma
states that there exists a \emph{worst-case}\footnote{
\cite{hsieh:21:tlo,letcher:21:oti}
also introduce worst-case minimax examples
that existing methods cannot find a stationary point.
A key difference from our example
is that their saddle-gradient operators
are not Lipschitz continuous.
In addition, the considered classes of methods
in \cite{hsieh:21:tlo,letcher:21:oti}
exclude EG+ and FEG, unlike the class $\mathcal{A}$.
} 
smooth example
that none of the methods in $\mathcal{A}$
can find its stationary point.
The corresponding smooth function is illustrated 
in Figure~\ref{fig:smooth_worst}.

\begin{lemma}\label{lem:smooth_worst}
Let us consider the following 
function
$f:\reals^2\rightarrow \reals$ for some $L,R>0$:
\begin{align}\label{eq:smooth_worst}
f(x,y)=\begin{cases}
	\frac{R}{2} &\text{for }x<y-\sqrt{\frac{R}{L}}\\
	-\frac{L}{2}(x-y)^2 - \sqrt{LR}(x-y) &\text{for }y-\sqrt{\frac{R}{L}}\le x<y\\
	\frac{L}{2}(x-y)^2-\sqrt{LR}(x-y) &\text{for }y
	\le x<y+\sqrt{\frac{R}{L}}\\
	-\frac{R}{2} &\text{for }y+\sqrt{\frac{R}{L}}<x.
	\end{cases}
\end{align}
Its saddle-gradient operator $\F$
is $L$-Lipschitz continuous
but not comonotone.\footnote{
Let $\z = \big(\x,\x+\sqrt{\frac{R}{L}}\big)$
and $\w = (0,0)$.
Since $\F\z = (0,0)$ and $\F\w = (-\sqrt{LR},-\sqrt{LR})$,
we get $\inprod{\F\z - \F\w}{\z - \w} = 2\sqrt{LR}\x + R$
and $\|\F\z - \F\w\|^2 = 2LR$,
which implies that $\rho = -\infty$ 
in the comonotonicity condition
as $\x\to-\infty$.} 
Then,
the sequence $\{\z_k\}_{k\ge 0}$ generated by any first-order method 
in class $\mathcal{A}$
with $\z_0=(0,0)$ satisfies $\|\F\z_k\|^2 = 2LR$ for all $k\ge 0$.
\end{lemma}
\begin{proof}
$\F$ satisfies $\F(x,y) = (-\sqrt{LR},-\sqrt{LR})$ whenever $x=y$.
Hence, for all sequences $\{\z_k\}_{k\ge 0}$ satisfying $\z_0 = (0,0)$ and $\z_k \in \z_0 + \text{span}\{\F\z_0,\cdots,\F\z_k\}$ for all $k\ge 0$, we have that $\{\z_k\}_{k\ge 0}\subseteq \{\z=(x,y)\in\reals^2|x=y\}$; thus, $\|\F\z_k\|^2 = 2LR$ for all $k\ge 0$. 
\end{proof}

The lemma implies that
one should consider a class of methods, other than the class $\mathcal{A}$,
to guarantee finding a stationary point of any smooth problem,
which we leave as future work.
We also leave finding additional conditions
for a Lipschitz continuous $\F$,
weaker than the weak MVI condition and the negative comonotonicity
(with $\rho>-\frac{1}{2L}$),
which guarantee convergence or its accelerated rate, respectively, as future work.

\comment{
Therefore, finding the weaker additional condition than the weak MVI condition, which makes a first-order method in $\mathcal{A}$ converge for the smooth minimax problem, would be an interesting problem.
Also, to develop a method which converges under only the Lipschitz continuity assumption on $\F$, we need to find another class of 
methods, which is not $\mathcal{A}$.
We leave solving those problems as future work.}

\begin{figure}
    \centering
    \includegraphics[width=.5\textwidth]{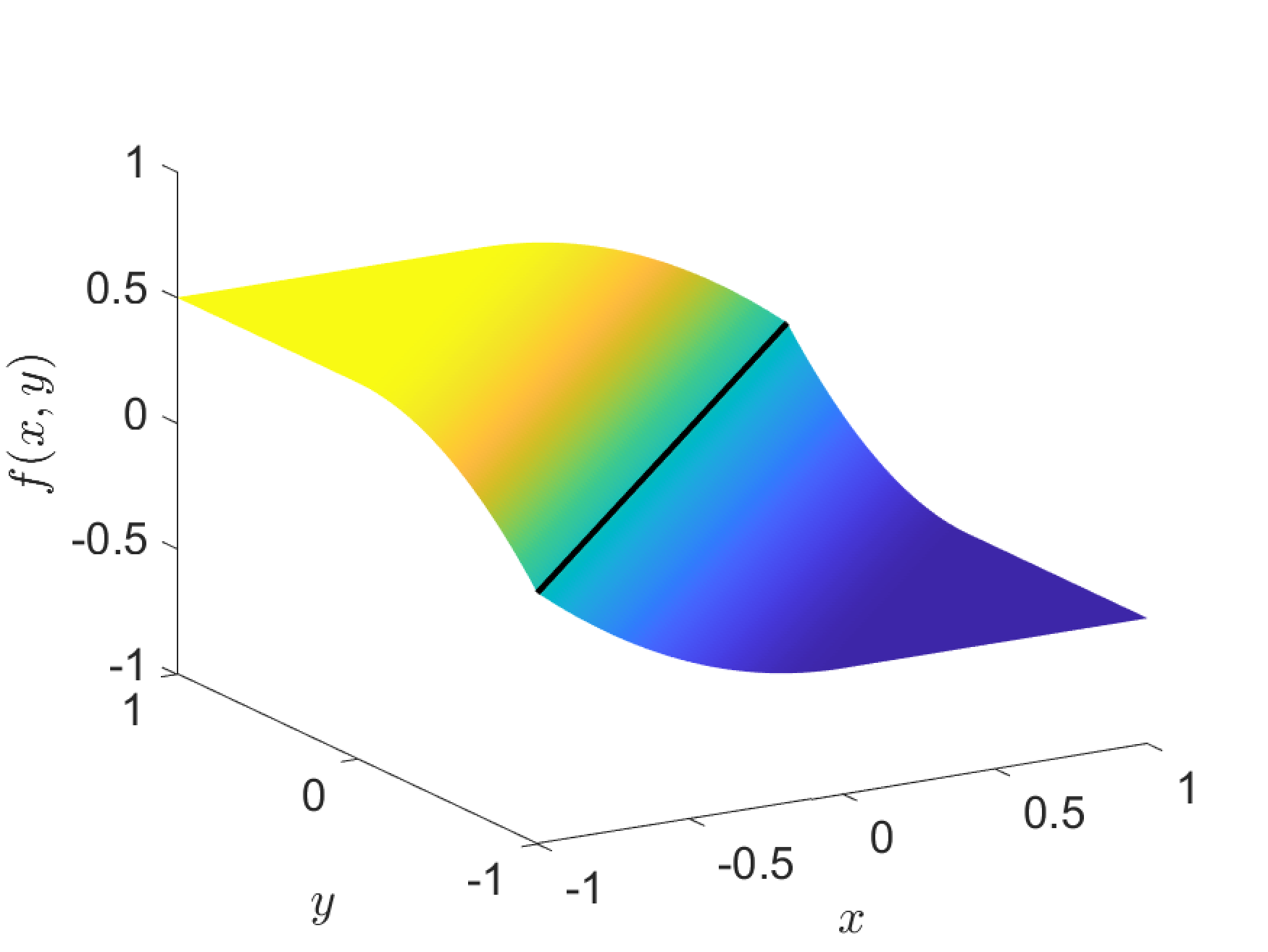}
    \caption{A smooth worst-case example $f(x,y)$~\eqref{eq:smooth_worst} with $L=R=1$ for first-order methods. any sequence $\{\z_k\}_{k\ge 0}$ generated by a first-order method in class $\mathcal{A}$ starting from $(0,0)$ is contained in the line $x=y$.}
    \label{fig:smooth_worst}
\end{figure}
\section{Conclusion}
This paper proposed a two-time-scale and anchored extragradient method, named FEG, for smooth structured nonconvex-nonconcave problems. The proposed FEG has an accelerated $\mathcal{O}(1/k^2)$ rate, 
with respect to the squared gradient norm,
for the Lipschitz continuous and negative comonotone operators for the first time. 
The FEG also 
has value for smooth convex-concave problems, 
compared to existing works.
We further studied its backtracking line-search version, named FEG-A, for the smooth structured nonconvex-nonconcave problems and studied its stochastic version, named S-FEG, for smooth convex-concave problems. We leave 
extending this work to 
stochastic, composite, or more general nonconvex-nonconcave setting
and applying to more realistic problems
as future work.
\comment{
\begin{itemize}
    \item stochastic FEG under negative comonotone condition is unclear.
    \item composite analysis
    \item relation with ODE?
    \item strong monotone? adaptive restart?
\end{itemize}
}

\begin{ack}

This work was supported in part by the National Research Foundation of Korea (NRF) grant funded by the Korea government (MSIT) (No. 2019R1A5A1028324), the POSCO Science Fellowship of POSCO TJ Park Foundation, and the Samsung Science and Technology Foundation
(No. SSTF-BA2101-02).
\end{ack}

\bibliographystyle{plain}
\bibliography{master,mastersub}

\comment{
\section*{Checklist}

\comment{
The checklist follows the references.  Please
read the checklist guidelines carefully for information on how to answer these
questions.  For each question, change the default \answerTODO{} to \answerYes{},
\answerNo{}, or \answerNA{}.  You are strongly encouraged to include a {\bf
justification to your answer}, either by referencing the appropriate section of
your paper or providing a brief inline description.  For example:
\begin{itemize}
  \item Did you include the license to the code and datasets? \answerYes{See Section~\ref{gen_inst}.}
  \item Did you include the license to the code and datasets? \answerNo{The code and the data are proprietary.}
  \item Did you include the license to the code and datasets? \answerNA{}
\end{itemize}
Please do not modify the questions and only use the provided macros for your
answers.  Note that the Checklist section does not count towards the page
limit.  In your paper, please delete this instructions block and only keep the
Checklist section heading above along with the questions/answers below.
}

\begin{enumerate}

\item For all authors...
\begin{enumerate}
  \item Do the main claims made in the abstract and introduction accurately reflect the paper's contributions and scope?
    \answerYes{}
  \item Did you describe the limitations of your work?
    \answerYes{
    See Section~\ref{sec:stochastic} and~\ref{sec:disc}.}
  \item Did you discuss any potential negative societal impacts of your work?
    \answerNA{}
  \item Have you read the ethics review guidelines and ensured that your paper conforms to them?
    \answerYes{}
\end{enumerate}

\item If you are including theoretical results...
\begin{enumerate}
  \item Did you state the full set of assumptions of all theoretical results?
    \answerYes{See Section~\ref{sec:preliminaries}}
	\item Did you include complete proofs of all theoretical results?
    \answerYes{See Section~\ref{sec:conv_analysis} and Appendix}
\end{enumerate}

\item If you ran experiments...
\begin{enumerate}
  \item Did you include the code, data, and instructions needed to reproduce the main experimental results (either in the supplemental material or as a URL)?
    \answerNA{We did not run experiments}
  \item Did you specify all the training details (e.g., data splits, hyperparameters, how they were chosen)?
    \answerNA{}
	\item Did you report error bars (e.g., with respect to the random seed after running experiments multiple times)?
    \answerNA{}
	\item Did you include the total amount of compute and the type of resources used (e.g., type of GPUs, internal cluster, or cloud provider)?
    \answerNA{}
\end{enumerate}

\item If you are using existing assets (e.g., code, data, models) or curating/releasing new assets...
\begin{enumerate}
  \item If your work uses existing assets, did you cite the creators?
    \answerNA{}
  \item Did you mention the license of the assets?
    \answerNA{}
  \item Did you include any new assets either in the supplemental material or as a URL?
    \answerNA{}
  \item Did you discuss whether and how consent was obtained from people whose data you're using/curating?
    \answerNA{}
  \item Did you discuss whether the data you are using/curating contains personally identifiable information or offensive content?
    \answerNA{}
\end{enumerate}

\item If you used crowdsourcing or conducted research with human subjects...
\begin{enumerate}
  \item Did you include the full text of instructions given to participants and screenshots, if applicable?
    \answerNA{}
  \item Did you describe any potential participant risks, with links to Institutional Review Board (IRB) approvals, if applicable?
    \answerNA{}
  \item Did you include the estimated hourly wage paid to participants and the total amount spent on participant compensation?
    \answerNA{}
\end{enumerate}

\end{enumerate}
}
\newpage

\appendix
\renewcommand\thesubsection{\Alph{subsection}}
\numberwithin{lemma}{subsection}

\section*{Appendix}

\subsection{Proof for Section~\ref{sec:preliminaries}}
\subsubsection{Proof of Example~\ref{ex:comonotone,relation}}
\label{appx:comonotone,relation}
Let $f_\eta$ be the saddle envelope of $f$~\cite{grimmer:20:tlo}:
\begin{align*}
f_\eta(\bar{\x},\bar{\y}):= \min_{\x\in\Xs}\max_{\y\in\Ys} f(\x,\y)+\frac{\eta}{2}\|\x-\bar{\x}\|^2 - \frac{\eta}{2}\|\y-\bar{\y}\|^2
,\end{align*}
and 
$\F_\eta$ be its saddle gradient operator.
Proposition 2.10 in \cite{grimmer:20:tlo} shows that $f_\eta$ satisfies
\begin{gather*}
\frac{\eta\alpha}{\eta+\alpha}\I \preceq \nabla_{\x\x}^2 f_\eta \preceq \eta\I
\quad\text{and}\quad
\frac{\eta\alpha}{\eta+\alpha}\I \preceq -\nabla_{\y\y}^2 f_\eta \preceq \eta\I.
\end{gather*}
This implies that
$\F_\eta$ is $\frac{\eta\alpha}{\eta+\alpha}$-strongly monotone
(and thus monotone).

It is enough to show that $\F_\eta$ is monotone if and only if $\F$ is $-\frac{1}{\eta}$-comonotone. 
By Lemma~2.5 in \cite{grimmer:20:tlo}, we have the relationship
$\F_\eta\z= \F\R\z$,
where 
$\R:=\big(\I + \frac{1}{\eta}\F\big)^{-1}$
denotes the standard resolvent of $\frac{1}{\eta}\F$.
The resolvent $\R$ is injective for $\eta>\gamma$.
Let $Z:= \Xs\times\Ys$.
Then, $\F_\eta$ is monotone if and only if
\begin{align*}
& \Inprod{\F\R\z
	-\F\R\w
}{\z-\w}\ge 0, \qquad\forall \z,\w\in Z,\\
\Leftrightarrow \quad& \Inprod{\F\z'-\F\w'}{\Big(\I+\frac{1}{\eta}\F\Big)\z'-\Big(\I+\frac{1}{\eta}\F\Big)\w'}\ge 0, \qquad
\forall \z,\w\in Z,
\z'=\R\z,
\w'= \R\w, \\
\Leftrightarrow \quad& \inprod{\F\z'-\F\w'}{\z'-\w'}\ge -\frac{1}{\eta}\|\F\z'-\F\w'\|^2, \qquad
\forall \z',\w'\in Z,
\end{align*}
which corresponds to the $-\frac{1}{\eta}$-comonotonicity of $\F$.
\qed

\subsection{Proof for Section~\ref{sec:FEG}}

\comment{
\subsubsection{Proof of Theorem~\ref{thm:convergence_deterministic}}
It is straightforward to verify that the given $\{\alpha_k\}_{k\ge 0}$ and $\{\beta_k\}_{k\ge 0}$ satisfy the conditions in Lemma~\ref{lem:potential}. By noting that
\begin{gather*}
b_k = \frac{1}{1-\beta_{k-1}}b_{k-1} = \Big(\prod_{i=1}^{k-1}\frac{1}{1-\beta_i}\Big)b_1 = k \qquad\text{and}\\
a_k = \frac{b_k(1-\beta_k)}{2\beta_k}(\alpha_k+2\rho) - b_k\rho = \frac{k^2}{2}\Big(\frac{1}{L}+2\rho\Big)-k\rho,
\end{gather*}
Lemma~\ref{lem:potential} implies that
\begin{align*}
0=V_1\ge V_k = \left(\frac{k^2}{2}\Big(\frac{1}{L}+2\rho\Big)-k\rho\right)\|\F\z_k\|^2 - k\inprod{\F\z_k}{\z_0-\z_k}.
\end{align*}
Therefore,
\begin{align*}
\frac{k^2}{2}\Big(\frac{1}{L}+2\rho\Big)\|\F\z_k\|^2&\le k\inprod{\F\z_k}{\z_0-\z_k} + k\rho\|\F\z_k\|^2\\
&= k\inprod{\F\z_k}{\z_0-\z_*} +k\inprod{\F\z_k}{\z_*-\z_k} + k\rho\|\F\z_k\|^2\\
&\le k\inprod{\F\z_k}{\z_0-\z_*}\\
&\le k\|\F\z_k\| \|\z_0-\z_*\|.
\end{align*}
The desired result follows directly by dividing both sides by $\frac{k^2}{2}\Big(\frac{1}{L}+2\rho\Big)\|\F\z_k\|$.
\qed
}
\subsubsection{Proof of Example~\ref{lem:worst_case}}
\label{appx:worst_case}
Starting from $\z_0 = (1,0)$,
it is easy to verify that 
$\z_{1/2}=(1,0)$, $\z_1=(1,1)$, 
$\z_{1+1/2} = \big(\frac{1}{2},1\big)$,
and $\z_2 = (0,1)$. 
We next use the induction to show that 
$\z_k = \big(0,\frac{2}{k}\big)$
for $k=4l+2$ and for all $l=0,1,2,\ldots$.
Assume that $\z_k = \big(0,\frac{2}{k}\big)$ for some $k=4l+2$.
Then, the next eight consecutive iterates are as follows:
\begingroup
\allowdisplaybreaks
\begin{align*}
	\z_{k+1/2} &= \z_k + \frac{1}{k+1}(\z_0-\z_k) - \Big(1-\frac{1}{k+1}\Big)\frac{1}{L}\F\z_k\\
	&= \Big(\frac{1}{k+1},\frac{2}{k+1}\Big) - \frac{k}{k+1}\Big(\frac{2}{k},0\Big) = \Big(-\frac{1}{k+1},\frac{2}{k+1}\Big), \\
	\z_{k+1} &= \z_k + \frac{1}{k+1}(\z_0-\z_k) - \frac{1}{L}\F\z_{k+1/2}\\
	&= \Big(\frac{1}{k+1},\frac{2}{k+1}\Big) - \Big(\frac{2}{k+1},\frac{1}{k+1}\Big) = \Big(-\frac{1}{k+1},\frac{1}{k+1}\Big), \\
	\z_{k+1+1/2} &= \z_{k+1} + \frac{1}{k+2}(\z_0-\z_{k+1}) - \Big(1-\frac{1}{k+2}\Big)\frac{1}{L}\F\z_{k+1}\\
	&= \Big(0,\frac{1}{k+2}\Big) -\frac{k+1}{k+2}\Big(\frac{1}{k+1},\frac{1}{k+1}\Big) = \Big(-\frac{1}{k+2},0\Big), \\
	\z_{k+2} &= \z_{k+1} + \frac{1}{k+2}(\z_0-\z_{k+1}) - \frac{1}{L}\F\z_{k+1+1/2}\\
	&=\Big(0,\frac{1}{k+2}\Big) -\Big(0,\frac{1}{k+2}\Big)= (0,0)\\
	\z_{k+2+1/2} &= \z_{k+2}+\frac{1}{k+3}(\z_0-\z_{k+2}) - \Big(1-\frac{1}{k+3}\Big)\frac{1}{L}\F\z_{k+2}\\
	&= \Big(\frac{1}{k+3},0\Big), \\
	\z_{k+3} &= \z_{k+2}+\frac{1}{k+3}(\z_0-\z_{k+2}) - \frac{1}{L}\F\z_{k+2+1/2}\\
	&= \Big(\frac{1}{k+3},0\Big) - \Big(0,-\frac{1}{k+3}\Big) = \Big(\frac{1}{k+3},\frac{1}{k+3}\Big), \\
	\z_{k+3+1/2} &= \z_{k+3} + \frac{1}{k+4}(\z_0-\z_{k+3}) - \Big(1-\frac{1}{k+4}\Big)\frac{1}{L}\F\z_{k+3}\\
	&= \Big(\frac{2}{k+4},\frac{1}{k+4}\Big) -\frac{k+3}{k+4}\Big(\frac{1}{k+3},-\frac{1}{k+3}\Big) = \Big(\frac{1}{k+4},\frac{2}{k+4}\Big), \\
	\z_{k+4} &=	\z_{k+3} + \frac{1}{k+4}(\z_0-\z_{k+3}) - \frac{1}{L}\F\z_{k+3+1/2}\\
	&=\Big(\frac{2}{k+4},\frac{1}{k+4}\Big) - \Big(\frac{2}{k+4},-\frac{1}{k+4}\Big)= \Big(0,\frac{2}{k+4}\Big),
\end{align*}
\endgroup
so $\z_{4l+6} = \big(0,\frac{2}{4l+6}\big)$.
Therefore, 
we get
$\z_{4l+2} 
=\big(0,\frac{1}{2l+1}\big)$ for all $l\ge 0$.
\qed

\subsection{Algorithm and proofs for Section~\ref{sec:FEG-A}}

\subsubsection{A detailed description of FEG-A}\label{appx:feg_a_detailed}

A detailed description of 
the FEG-A, in Algorithm~\ref{alg:feg_a},
is provided in Algorithm~\ref{alg:feg_a_detailed}.

\begin{algorithm}[b!]
	\caption{Fast extragradient method with adaptive step size (FEG-A)}
	\label{alg:feg_a_detailed}
	\begin{algorithmic}
		\State {\bf Input:} $\z_0\in\reals^d$, 
		$\tau_{-1}\in(\max\{0,-2\rho\},\infty)$, $\eta_0\in(0,\infty)$,
		$\delta\in(0,1)$
		\State Find the smallest nonnegative integer $i_0$ such that $\hat{\z} = \z_0 - 
		\tau_{-1}(1-\delta)^{i_0}\F\z_0$ satisfies
		$
		\|\F\hat{\z}-\F\z_0\| \le \frac{1}{\tau_{-1}(1-\delta)^{i_0}}\|\hat{\z}-\z_0\|$. 
		\State $\tau_0 = \tau_{-1}(1-\delta)^{i_0}$, $\z_1 = \z_0 - \tau_0\F\z_0$.
		\For{$k=1,2,\ldots$}
		\State $i_k=j_k=0$, $\text{searching}=\text{True}$
		\While{$\text{searching}=\text{True}$}
		\State $\text{searching}=\text{False}$, $\tau_k = \tau_{k-1}(1-\delta)^{i_k}$, $\eta_k = \eta_{k-1}(1-\delta)^{j_k}$
		\begin{align*}
		\z_{k+1/2} &= \z_k + \frac{1}{k+1}(\z_0-\z_k) - \Big(1-\frac{1}{k+1}\Big)\eta_k \F\z_k \quad\text{and}\\
		\z_{k+1} &= \z_k + \frac{1}{k+1}(\z_0-\z_k) - \tau_k\F\z_{k+1/2} - \Big(1-\frac{1}{k+1}\Big)(\eta_k-\tau_k)\F\z_k
		\end{align*}
		\If{$\|\F\z_{k+1}-\F\z_{k+1/2}\| > \frac{1}{\tau_k}\|\z_{k+1}-\z_{k+1/2}\|$}
		\State $i_k \leftarrow i_k+1$
		\State $\text{searching}=\text{True}$
		\EndIf
		\If{$\inprod{\F\z_{k+1}-\F\z_k}{\z_{k+1}-\z_k} < \frac{\eta_k-\tau_k}{2}\|\F\z_{k+1}-\F\z_k\|^2$}
		\State $j_k \leftarrow j_k +1$
		\State $\text{searching}=\text{True}$
		\EndIf
		\EndWhile
		\EndFor
	\end{algorithmic}
\end{algorithm}
\subsubsection{Proof of Lemma~\ref{lem:lower_bounds_of_stepsizes}}
We show that $\tau_k \ge \underline{\tau} := \min\big\{\tau_{-1},\frac{1-\delta}{L}\big\}$ 
for all $k\ge 0$,
and $\eta_k \ge \min\{\eta_0,(1-\delta)(\tau_k+2\rho)\}$
for all $k\ge1$
by contradiction.
Note that since $\tau_{-1} > \max\{0,-2\rho\}$ and $\rho > -\frac{1-\delta}{2L}$,
both $\underline{\tau}$ and $\underline{\eta}$ are positive.

First, suppose that $\tau_k < \underline{\tau}$ 
for some $k\ge 0$. 
(1) For the case $\tau_{-1}\le \frac{1-\delta}{L}$, 
we get $\tau_k = \tau_{-1}$ for all $k\ge0$ 
by the definition of $\tau_k$, which contradicts to the assumption
$\tau_k < 
\tau_{-1}$.
(2) Consider 
the case $\tau_{-1} > \frac{1-\delta}{L}$,
where the assumption 
reduces
to $\tau_k < \frac{1-\delta}{L}$.
For $k=0$,
by the definition of $\tau_0$, we get $\|\F\hat{\z}_1-\F\z_0\|>\frac{1-\delta}{\tau_0}\|\hat{\z}_1-\z_0\|$ where $\hat{\z}_1 = \z_0-\frac{\tau_0}{1-\delta}\F\z_0$, which contradicts to the $L$-Lipschitz continuity of $\F$ as $\frac{1-\delta}{\tau_0}
\;>\; L$.
For $k\ge1$,
by the definition of $\tau_k$, there exists $i\le k$ such that 
the two corresponding iterates
\begin{align*}
\hat{\z}_{i+1/2} &= \z_i + \frac{1}{i+1}(\z_0-\z_i) - \Big(1-\frac{1}{i+1}\Big)\hat{\eta}_i\F\z_i\quad\text{and}\\
\hat{\z}_{i+1} &= \z_i + \frac{1}{i+1}(\z_0-\z_i) - \frac{\tau_k}{1-\delta}\F\hat{\z}_{i+1/2} - \Big(1-\frac{1}{i+1}\Big)\Big(\hat{\eta}_i-\frac{\tau_k}{1-\delta}\Big)\F\z_i
\end{align*}
satisfy
$\|\F\hat{\z}_{i+1}-\F\hat{\z}_{i+1/2}\|>\frac{1-\delta}{\tau_k}\|\hat{\z}_{i+1}-\hat{\z}_{i+1/2}\|$ for some 
$\hat{\eta}_i>0$.
However, this inequality contradicts to the $L$-Lipschitz continuity of $\F$ as $\frac{1-\delta}{\tau_k} 
\;>\; L$. 
Therefore, 
we have $\tau_k \ge \underline{\tau} > 0$
for all $k\ge 0$.

Similarly, suppose that $\eta_k < \min\{\eta_0,(1-\delta)(\tau_k+2\rho)\}$
for some $k\ge 1$. 
(1) For the case $\eta_0 \le (1-\delta)(\tau_k+2\rho)$, we get $\eta_i = \eta_0$ for all $1\le i \le k$
by the definition of $\eta_k$, which contradicts to the
assumption $\eta_k < \eta_0$.
(2) Consider 
the case $\eta_0 > (1-\delta)(\tau_k+2\rho)$,
where the assumption reduces to
$\eta_k < (1-\delta)(\tau_k + 2\rho)$.
%
Then by the definition of $\eta_k$, there exists $i\le k$ such that 
the two corresponding iterates
\begin{align*}
\hat{\z}_{i+1/2} &= \z_i + \frac{1}{i+1}(\z_0-\z_i) - \Big(1-\frac{1}{i+1}\Big)\frac{\eta_k}{1-\delta}\F\z_i\quad\text{and}\\
\hat{\z}_{i+1} &= \z_i + \frac{1}{i+1}(\z_0-\z_i) - \hat{\tau}_i\F\hat{\z}_{i+1/2} - \Big(1-\frac{1}{i+1}\Big)\Big(\frac{\eta_k}{1-\delta}-\hat{\tau}_i\Big)\F\z_i
\end{align*}
satisfy
$\inprod{\F\hat{\z}_{i+1}-\F\hat{\z}_i}{\hat{\z}_{i+1}-\hat{\z}_i}<\frac{\frac{\eta_k}{1-\delta}-\hat{\tau}_i}{2}\|\F\hat{\z}_{i+1}-\F\hat{\z}_i\|^2$ for some $\hat{\tau}_i \ge \tau_i$. 
However, this inequality contradicts to the $\rho$-comonotonicity of $\F$ as
$\frac{\frac{\eta_k}{1-\delta}-\hat{\tau}_i}{2} < \frac{ 
\tau_k
+2\rho - \hat{\tau}_i}{2}\le \rho$. 
Therefore, 
we have $\eta_k \ge \min\{\eta_0,(1-\delta)(\tau_k+2\rho)\}$
for all $k\ge0$.
\qed
\subsubsection{Proof of Theorem~\ref{thm:convergence_adaptive}}\label{appx:feg_a_thm}
Note that 
FEG-A is equivalent to \eqref{alg:special} with $\alpha_k = \tau_k$, $\beta_k = \frac{1}{k+1}$, and $\rho_k = \frac{\eta_k-\tau_k}{2}$.
The given sequence in FEG-A satisfies the conditions in Lemma~\ref{lem:potential} with $L_k = \frac{1}{\tau_k}$:
\begin{align*}
\frac{(1-\beta_{k+1})}{2\beta_{k+1}}(\alpha_{k+1}+2\rho_{k+1})-\rho_{k+1}= \frac{k}{2}\eta_{k+1} + \frac{1}{2}\tau_{k+1}\le \frac{k}{2}\eta_{k} + \frac{1}{2}\tau_{k}= \frac{1}{2\beta_k}(\alpha_k + 2\rho_k) - \rho_k
\end{align*}
where the inequality follows from the fact that $\{\tau_k\}_{k\ge 0}$ and $\{\eta_k\}_{k\ge 0}$ are nonincreasing sequences. Since
\begin{gather*}
a_k = \frac{b_k(1-\beta_k)}{2\beta_k}(\alpha_k+2\rho_k) - b_k\rho_k = \frac{k}{2}((k-1)\eta_k+\tau_k) \qquad\text{and}\\
b_k = \frac{1}{1-\beta_{k-1}}b_{k-1} = \Big(\prod_{i=1}^{k-1}\frac{1}{1-\beta_i}\Big)b_1 = k,
\end{gather*}
Lemma~\ref{lem:potential} implies that
\begin{align*}
0 = V_0 \ge V_k = \frac{k}{2}((k-1)\eta_k+\tau_k)\|\F\z_k\|^2 - k\inprod{\F\z_k}{\z_0-\z_k}.
\end{align*}
Therefore, 
\begin{align*}
\frac{k}{2}((k-1)\eta_k+\tau_k+2\rho)\|\F\z_k\|^2 &\le k\inprod{\F\z_k}{\z_0-\z_k}+k\rho\|\F\z_k\|^2\\
&= k\inprod{\F\z_k}{\z_0-\z_*} +k\inprod{\F\z_k}{\z_*-\z_k} +k\rho\|\F\z_k\|^2\\
&\le k\inprod{\F\z_k}{\z_0-\z_*} \qquad(\because \text{$\rho$-comonotonicity of $\F$})\\
&\le k\|\F\z_k\| \|\z_0-\z_*\|.
\end{align*}
Then by dividing both sides by $\frac{k}{2}((k-1)\eta_k+\tau_k+2\rho)\|\F\z_k\|$ and using Lemma~\ref{lem:lower_bounds_of_stepsizes}, we get
\begin{align*}
\|\F\z_k\| &\le \frac{2\|\z_0-\z_*\|}{(k-1)\eta_k+\tau_k+2\rho}. 
\end{align*}
\qed
\comment{
\subsubsection{Proof of Theorem~\ref{thm:convergence_adaptive}}
Note that by taking $\alpha_k = \tau_k$ and $\beta_k = \frac{1}{k+1}$ in \eqref{alg:special}, we get FEG-A. The given sequence in FEG-A satisfies the conditions in Lemma~\ref{lem:potential} with $L_k = \frac{1}{\tau_k}$ and $\rho_k = \frac{\eta_k-\tau_k}{2}$:
\begin{align*}
\frac{(1-\beta_{k+1})}{2\beta_{k+1}}(\alpha_{k+1}+2\rho_{k+1})-\rho_{k+1}&= \frac{k+1}{2}(\alpha_{k+1}+2\rho_{k+1}) -\rho_{k+1}\\
&= \frac{k}{2}(\alpha_{k+1}+2\rho_{k+1}) + \frac{1}{2}\alpha_{k+1}\\
&= \frac{k}{2}\eta_{k+1} + \frac{1}{2}\tau_{k+1}\\
&\le \frac{k}{2}\eta_{k} + \frac{1}{2}\tau_{k}\\
&= \frac{1}{2\beta_k}(\alpha_k + 2\rho_k) - \rho_k
\end{align*}
where the inequality follows from the fact that $\{\tau_k\}_{k\ge 0}$ and $\{\eta_k\}_{k\ge 0}$ are nonincreasing sequences. By noting that
\begin{gather*}
b_k = \frac{1}{1-\beta_{k-1}}b_{k-1} = \Big(\prod_{i=1}^{k-1}\frac{1}{1-\beta_i}\Big)b_1 = k \qquad\text{and}\\
a_k = \frac{b_k(1-\beta_k)}{2\beta_k}(\alpha_k+2\rho_k) - b_k\rho_k = \frac{k}{2}((k-1)\eta_k+\tau_k),
\end{gather*}
Lemma~\ref{lem:potential} implies that
\begin{align*}
0 = V_1 \ge V_k = \frac{k}{2}((k-1)\eta_k+\tau_k)\|\F\z_k\|^2 - k\inprod{\F\z_k}{\z_0-\z_k}.
\end{align*}
Therefore, 
\begin{align*}
\frac{k}{2}((k-1)\eta_k+\tau_k+2\rho)\|\F\z_k\|^2 &\le k\inprod{\F\z_k}{\z_0-\z_k}+k\rho\|\F\z_k\|^2\\
&= k\inprod{\F\z_k}{\z_0-\z_*} +k\inprod{\F\z_k}{\z_*-\z_k} +k\rho\|\F\z_k\|^2\\
&\le k\inprod{\F\z_k}{\z_0-\z_*}\\
&\le k\|\F\z_k\| \|\z_0-\z_*\|.
\end{align*}
Then by dividing both sides by $\frac{k}{2}((k-1)\eta_k+\tau_k+2\rho)\|\F\z_k\|$ and using Lemma~\ref{lem:lower_bounds_of_stepsizes}, we get
\begin{align*}
\|\F\z_k\|&\le \frac{2\|\z_0-\z_*\|}{(k-1)\eta_k+\tau_k+2\rho}\\
&\le \frac{2\|\z_0-\z_*\|}{(k-1)\Big(\frac{(1-\delta)^2}{L}+(1-\delta)2\rho\Big)+\frac{1-\delta}{L}+2\rho}\\
&= \frac{2\|\z_0-\z_*\|}{((k-1)(1-\delta)+1)\Big(\frac{1-\delta}{L}+2\rho\Big)}
\end{align*}
\qed
}
\subsection{Proofs for Section~\ref{sec:conv_analysis}}
\subsubsection{Proof of Lemma~\ref{lem:potential}}
First, for $k=0$, note that
\begin{align}
V_1 &= a_1\|\F\z_1\|^2 - b_1\inprod{\F\z_1}{\z_0-\z_1}\nonumber\\
&=a_1\|\F\z_1\|^2 - \alpha_0b_1\inprod{\F\z_1}{\F\z_0}\nonumber\\
&=\Big(\frac{b_1(1-\beta_1)}{2\beta_1}(\alpha_1+2\rho_1) -b_1\rho_1\Big)\|\F\z_1\|^2 - \alpha_0\inprod{\F\z_1}{\F\z_0}\nonumber\\
&\le \Big(\frac{b_1}{2\beta_0}(\alpha_0+2\rho_0)-b_1\rho_0\Big)\|\F\z_1\|^2 - \alpha_0\inprod{\F\z_1}{\F\z_0}\nonumber\\
&= \frac{\alpha_0}{2}\|\F\z_1\|^2 - \alpha_0\inprod{\F\z_1}{\F\z_0}.\label{eq:init_potential_gap_1}
\end{align}
By the given condition, we get
\begin{align}\label{eq:init_potential_gap_2}
0\le L_0^2\|\z_1-\z_0\|^2-\|\F\z_1-\F\z_0\|^2 = L_0^2\alpha_0^2\|\F\z_0\|^2 - \|\F\z_1-\F\z_0\|^2.
\end{align}
Hence, the sum of \eqref{eq:init_potential_gap_1} and \eqref{eq:init_potential_gap_2} with multiplying factor $\frac{\alpha_0}{2}$ yields
\begin{align*}
V_1&\le \frac{\alpha_0}{2}\|\F\z_1\|^2 - \alpha_0\inprod{\F\z_1}{\F\z_0} 
+ \frac{\alpha_0}{2}(L_0^2\alpha_0^2\|\F\z_0\|^2 - \|\F\z_1-\F\z_0\|^2)\\
&= \frac{\alpha_0(L_0^2\alpha_0^2-1)}{2}\|\F\z_0\|^2 = V_0.
\end{align*}
Next, for $k\ge 1$, here we note the following relations for later use:
\begin{align*}
\z_{k+1}-\z_k &= \frac{\beta_k}{1-\beta_k}(\z_0-\z_{k+1}) - \frac{\alpha_k}{1-\beta_k}\F\z_{k+1/2}-2\rho_k\F\z_k,\\
\z_{k+1}-\z_k &= \beta_k(\z_0-\z_k) - \alpha_k\F\z_{k+1/2}-2(1-\beta_k)\rho_k\F\z_k, \text{ and}\\
\z_{k+1}-\z_{k+1/2}&= \alpha_k((1-\beta_k)\F\z_k - \F\z_{k+1/2}).
\end{align*}
Then, by the given condition, we have
\begin{align}
V_k - V_{k+1}\ge& V_k - V_{k+1} - \frac{b_k}{\beta_k}(\inprod{\F\z_{k+1}-\F\z_k}{\z_{k+1}-\z_k}-\rho_k\|\F\z_{k+1}-\F\z_k\|^2)\nonumber\\
=&V_k - V_{k+1} - \frac{b_k}{\beta_k}\inprod{\F\z_{k+1}}{\z_{k+1}-\z_k} + \frac{b_k}{\beta_k}\inprod{\F\z_k}{\z_{k+1}-\z_k} + \frac{b_k\rho_k}{\beta_k}\|\F\z_{k+1}-\F\z_k\|^2\nonumber\\
=&(a_k\|\F\z_k\|^2-b_k\inprod{\F\z_k}{\z_0-\z_k})-(a_{k+1}\|\F\z_{k+1}\|^2-b_{k+1}\inprod{\F\z_{k+1}}{\z_0-\z_{k+1}})\nonumber\\
&-\frac{b_k}{\beta_k}\Inprod{\F\z_{k+1}}{\frac{\beta_k}{1-\beta_k}(\z_0-\z_{k+1})-\frac{\alpha_k}{1-\beta_k}\F\z_{k+1/2}-2\rho_k\F\z_k}\nonumber\\
&+\frac{b_k}{\beta_k}\inprod{\F\z_k}{\beta_k(\z_0-\z_k)-\alpha_k\F\z_{k+1/2}-2(1-\beta_k)\rho_k\F\z_k}\nonumber\\
&+\frac{b_k\rho_k}{\beta_k}\|\F\z_{k+1}-\F\z_k\|^2\nonumber\\
=&\Big(a_k - \frac{b_k(1-2\beta_k)\rho_k}{\beta_k}\Big)\|\F\z_k\|^2 - \Big(-\frac{b_k\rho_k}{\beta_k} + a_{k+1}\Big)\|\F\z_{k+1}\|^2\nonumber\\
&+\Big(b_{k+1}-\frac{b_k}{1-\beta_k}\Big)\inprod{\F\z_{k+1}}{\z_0-\z_{k+1}}+\frac{b_k\alpha_k}{\beta_k(1-\beta_k)}\inprod{\F\z_{k+1}}{\F\z_{k+1/2}}\nonumber\\
&-\frac{\alpha_k b_k}{\beta_k}\inprod{\F\z_k}{\F\z_{k+1/2}}\nonumber\\
=&\Big(a_k - \frac{b_k(1-2\beta_k)\rho_k}{\beta_k}\Big)\|\F\z_k\|^2 - \Big(-\frac{b_k\rho_k}{\beta_k} + a_{k+1}\Big)\|\F\z_{k+1}\|^2\nonumber\\
&+\frac{b_k\alpha_k}{\beta_k(1-\beta_k)}\inprod{\F\z_{k+1}}{\F\z_{k+1/2}}-\frac{\alpha_k b_k}{\beta_k}\inprod{\F\z_k}{\F\z_{k+1/2}}. \qquad(\because b_{k+1}=\frac{b_k}{1-\beta_k}.)\label{eq:potential_gap_1}
\end{align}
By the given condition, we also have
\begin{align}\label{eq:potential_gap_2}
0&\ge \|\F\z_{k+1}-\F\z_{k+1/2}\|^2 - L_k^2\|\z_{k+1}-\z_{k+1/2}\|^2 \nonumber\\
&=\|\F\z_{k+1}-\F\z_{k+1/2}\|^2 - L_k^2\alpha_k^2\|(1-\beta_k)\F\z_k-\F\z_{k+1/2}\|^2.
\end{align}
Hence, the sum of \eqref{eq:potential_gap_1} and \eqref{eq:potential_gap_2} with multiplying factor $\frac{b_k}{2L_k^2\alpha_k\beta_k(1-\beta_k)}$ yields
\begin{align*}
V_k-V_{k+1}\ge& \Big(a_k - \frac{b_k(1-2\beta_k)\rho_k}{\beta_k}\Big)\|\F\z_k\|^2 - \Big(-\frac{b_k\rho_k}{\beta_k}+a_{k+1}\Big)\|\F\z_{k+1}\|^2\\
&+ \frac{b_k\alpha_k}{\beta_k(1-\beta_k)}\inprod{\F\z_{k+1}}{\F\z_{k+1/2}}-\frac{\alpha_k b_k}{\beta_k}\inprod{\F\z_k}{\F\z_{k+1/2}}\\
&+\frac{b_k}{2L_k^2\alpha_k\beta_k(1-\beta_k)}(\|\F\z_{k+1}-\F\z_{k+1/2}\|^2-L_k^2\alpha_k^2\|(1-\beta_k)\F\z_k-\F\z_{k+1/2}\|^2)\\
=&\Big(a_k - \frac{b_k(1-2\beta_k)\rho_k}{\beta_k} - \frac{b_k(1-\beta_k)\alpha_k}{2\beta_k}\Big)\|\F\z_k\|^2 + \Big(\frac{b_k}{2L_k^2\alpha_k\beta_k(1-\beta_k)}+\frac{b_k\rho_k}{\beta_k}-a_{k+1}\Big)\|\F\z_{k+1}\|^2\\
&-\Big(\frac{b_k}{L_k^2\alpha_k\beta_k(1-\beta_k)}-\frac{b_k\alpha_k}{\beta_k(1-\beta_k)}\Big)\inprod{\F\z_{k+1}}{\F\z_{k+1/2}}\\
&+\frac{b_k}{2L_k^2\alpha_k\beta_k(1-\beta_k)}(1-L_k^2\alpha_k^2)\|\F\z_{k+1/2}\|^2\\
=&\Big(\frac{b_k}{2L_k^2\alpha_k(1-\beta_k)\beta_k}+\frac{b_k\rho_k}{\beta_k}-a_{k+1}\Big)\|\F\z_{k+1}\|^2\\
&-\frac{b_k(1-L_k^2\alpha_k^2)}{2L_k^2\alpha_k\beta_k(1-\beta_k)}2\inprod{\F\z_{k+1}}{\F\z_{k+1/2}}\\
&+\frac{b_k(1-L_k^2\alpha_k^2)}{2L_k^2\alpha_k\beta_k(1-\beta_k)}\|\F\z_{k+1/2}\|^2.\\
&\quad(\because a_k = \frac{b_k(1-\beta_k)}{2\beta_k}(\alpha_k+2\rho_k) -b_k\rho_k = \frac{b_k(1-\beta_k)\alpha_k}{2\beta_k}+\frac{b_k(1-2\beta_k)\rho_k}{\beta_k}.)
\end{align*}
Note that the given conditions imply that
\begin{align*}
a_{k+1} &= \frac{b_{k+1}(1-\beta_{k+1})}{2\beta_{k+1}}(\alpha_{k+1}+2\rho_{k+1}) - b_{k+1}\rho_{k+1}\\
&\le\frac{b_{k+1}}{2\beta_k}(\alpha_k+2\rho_k) -b_{k+1}\rho_k\\
&=\frac{b_{k+1}}{2\beta_k}\alpha_k + \frac{b_{k+1}(1-\beta_k)\rho_k}{\beta_k}\\
&=\frac{b_k}{2\beta_k(1-\beta_k)}\alpha_k + \frac{b_k\rho_k}{\beta_k}. \qquad(\because b_{k+1}=\frac{b_k}{1-\beta_k}.)
\end{align*}
Therefore, we get
\begin{align*}
V_k-V_{k+1}&\ge \frac{b_k(1-L_k^2\alpha_k^2)}{2L_k^2\alpha_k\beta_k(1-\beta_k)}(\|\F\z_{k+1}\|^2-2\inprod{\F\z_{k+1}}{\F\z_{k+1/2}}+\|\F\z_{k+1/2}\|^2)\\
&=\frac{b_k(1-L_k^2\alpha_k^2)}{2L_k^2\alpha_k\beta_k(1-\beta_k)}\|\F\z_{k+1}-\F\z_{k+1/2}\|^2\\
&\ge 0.
\end{align*}
Note that $\{\alpha_k\}_{k\ge1}\subseteq \big(0,\frac{1}{L_k}\big]$ and $\{\beta_k\}_{k\ge 1}\subseteq(0,1)$ are the sufficient conditions for $\frac{b_k}{2L_k^2\alpha_k\beta_k(1-\beta_k)}\ge 0$ and $\frac{b_k(1-L_k^2\alpha_k^2)}{2L_k^2\alpha_k\beta_k(1-\beta_k)}\ge 0$ for all $k\ge 1$.
\qed

\subsubsection{Convergence analysis for S-FEG}\label{appx:s_feg_pot}
In this section, we consider the following class of stochastic methods:
\begin{equation}\tag{Class S-FEG}\label{alg:stoc_general}
\begin{aligned}
\z_{k+1/2} &= \z_k + \beta_k (\z_0 - \z_k) - (1-\beta_k)\alpha_k \tF\z_k\\
\z_{k+1} &= \z_k + \beta_k(\z_0-\z_k) - \alpha_k \tF\z_{k+1/2}.
\end{aligned}
\end{equation}


As in the previous section, our analysis relies on the 
potential function, $V_k = a_k\|\F\z_k\|^2 - b_k\inprod{\F\z_k}{\z_0-\z_k}$. 
Although the expectation of the potential function 
is no longer nonincreasing, 
we have a lower bound on $\EE[V_k]-\EE[V_{k+1}]$ 
that
consists of $\sigma_k^2$ and $\sigma_{k+1/2}^2$ 
below.
\begin{lemma}\label{lem:potential_stoc}
	Let $\{\z_k\}_{k\ge 0}$ be the sequence generated by \eqref{alg:stoc_general}
	with
	$\{\alpha_k\}_{k\ge 0}$ and $\{\beta_k\}_{k\ge 0}$ 
	satisfying $\alpha_0\in(0,\infty)$, $\alpha_k\in\big(0,\frac{1}{L}\big]$, $\beta_0 = 1$, $\{\beta_k\}_{k\ge 1}\subseteq(0,1)$ for all $k\ge 1$, and
	\begin{align*}
	\frac{(1-\beta_{k+1})\alpha_{k+1}}{2\beta_{k+1}}
	\le \frac{\alpha_k}{2\beta_k}
	\end{align*}
	for all $k\ge 0$. 
	Assume that $\F$ is $L$-Lipschitz continuous
	and monotone,
	and let $\tF\z_{k/2} = \F\z_{k/2} + \xi_{k/2}$, where $\{\xi_{k/2}\}_{k\ge 0}$ are independent random variables satisfying $\EE[\xi_{k/2}]=0$ and $\EE[\|\xi_{k/2}\|^2]= \sigma_{k/2}^2$ for all $k\ge 0$.
	Then 
	$ 
	V_k = a_k\|\F\z_k\|^2-b_k\inprod{\F\z_k}{\z_0-\z_k}
	$ 
	with $a_0 = \frac{\alpha_0(L^2\alpha_0^2-1)}{2}$, $b_0 = 0$, $b_1 = 1$,
	\begin{align*}
	a_k = \frac{b_k(1-\beta_k)\alpha_k}{2\beta_k} \quad\text{and}\quad
	b_{k+1} = \frac{b_k}{1-\beta_k}
	\end{align*}
	for all $k\ge 1$
	satisfies
	\begin{align*}
	    \EE[V_0]-\EE[V_1] &\ge - \Big(\frac{L^2\alpha_0^3}{2}+L\alpha_0^2\Big)\sigma_0^2 \qquad\text{and}\\
	\EE[V_k]-\EE[V_{k+1}] &\ge -\frac{b_k\alpha_k(1+2L\alpha_k)}{2\beta_k}\Big((1-\beta_k)\sigma_k^2 + \frac{1}{1-\beta_k}\sigma_{k+1/2}^2\Big)
	\end{align*}
	for all $k\ge 1$.
\end{lemma}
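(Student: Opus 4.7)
The plan is to mirror the deterministic proof of Lemma~\ref{lem:potential} specialized to $\rho_k = 0$ (since~\eqref{alg:stoc_general} omits the $2\rho_k \F\z_k$ corrections), while tracking the stochastic noise through the algebra and then passing to expectation. Concretely, I would replicate every step of the deterministic argument but with $\tF = \F + \xi$ substituted into the iterate identities. The key measurability facts I will exploit are that $\xi_k$ is mean-zero and independent of the history before its draw (hence of $\z_k$ and $\F\z_k$), while $\xi_{k+1/2}$ is mean-zero and independent of everything up to and including $\z_{k+1/2}$; in particular $\xi_{k+1/2}$ is independent of both $\F\z_k$ and $\F\z_{k+1/2}$.

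For the base step $k=0$, I would expand $V_1 = a_1\|\F\z_1\|^2 - \alpha_0 \inprod{\F\z_1}{\tF\z_0}$ using $\z_1 - \z_0 = -\alpha_0 \tF\z_0$, and add the Lipschitz slack $L^2 \|\z_1 - \z_0\|^2 - \|\F\z_1 - \F\z_0\|^2 \ge 0$ with multiplier $\alpha_0/2$, exactly as in~\eqref{eq:init_potential_gap_2}. The only new contributions are (i) $\|\z_1 - \z_0\|^2 = \alpha_0^2 \|\tF\z_0\|^2$ has expectation $\alpha_0^2(\|\F\z_0\|^2 + \sigma_0^2)$, producing an $\alpha_0^3 L^2 \sigma_0^2 / 2$ slack; and (ii) the cross term $\alpha_0 \inprod{\F\z_1}{\tF\z_0}$ splits into $\alpha_0 \inprod{\F\z_1}{\F\z_0} + \alpha_0 \inprod{\F\z_1}{\xi_0}$, where the latter is controlled by Young's inequality paired with Lipschitz to generate the $L\alpha_0^2 \sigma_0^2$ term. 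Collecting signs yields the claimed $-(L^2 \alpha_0^3/2 + L\alpha_0^2)\sigma_0^2$ lower bound.

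For the inductive step $k \ge 1$, I would replay the derivation culminating in~\eqref{eq:potential_gap_1}, now with $\tF$ in place of $\F$ in the three identities
\begin{align*}
\z_{k+1} - \z_k &= \frac{\beta_k}{1-\beta_k}(\z_0 - \z_{k+1}) - \frac{\alpha_k}{1-\beta_k}\tF\z_{k+1/2}, \\
\z_{k+1} - \z_k &= \beta_k(\z_0 - \z_k) - \alpha_k \tF\z_{k+1/2}, \\
\z_{k+1} - \z_{k+1/2} &= \alpha_k\bigl((1-\beta_k)\tF\z_k - \tF\z_{k+1/2}\bigr),
\end{align*}
and combine with the Lipschitz slack~\eqref{eq:potential_gap_2} under the same multiplier $b_k/(2L^2 \alpha_k \beta_k (1-\beta_k))$ used in the deterministic proof. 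The deterministic portion of this sum is precisely the nonnegative residual $\tfrac{b_k(1-L^2\alpha_k^2)}{2L^2 \alpha_k \beta_k (1-\beta_k)}\|\F\z_{k+1}-\F\z_{k+1/2}\|^2$ of Lemma~\ref{lem:potential}. The stochastic residual consists of linear and quadratic terms in $\xi_k, \xi_{k+1/2}$. After taking expectation, every linear cross-term whose companion factor is independent of the corresponding noise vanishes by the tower property; the remaining linear terms, namely those involving $\F\z_{k+1}$ (which depends on $\xi_k$ and $\xi_{k+1/2}$), are split by Young's inequality so that the $\|\F\z_{k+1}\|^2$ contributions are absorbed into the available Lipschitz slack and only scaled $\sigma_k^2, \sigma_{k+1/2}^2$ terms survive.

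The main obstacle will be choosing the Young weights so that the final variance coefficient collapses to exactly $\tfrac{b_k \alpha_k (1 + 2 L \alpha_k)}{2\beta_k}$ with the stated asymmetric split $(1-\beta_k)\sigma_k^2$ versus $(1-\beta_k)^{-1}\sigma_{k+1/2}^2$. The factor $1 + 2L\alpha_k$ strongly suggests the $1$ arises directly from the quadratic noise terms $\alpha_k^2 \|\xi_{k+1/2}\|^2$ produced when $\tF\z_{k+1/2}$ is squared in the $\z_{k+1}-\z_{k+1/2}$ identity, while the $2L\alpha_k$ comes from absorbing $\|\F\z_{k+1}\|^2$ cross-terms into the Lipschitz slack (contributing one factor of $L$ through Cauchy--Schwarz followed by Lipschitz). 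The asymmetric weights $(1-\beta_k)$ and $(1-\beta_k)^{-1}$ reflect that $\xi_k$ appears with coefficient $(1-\beta_k)\alpha_k$ in $\z_{k+1/2}$, whereas $\xi_{k+1/2}$ appears with coefficient $\alpha_k$ in $\z_{k+1}$ and is then reweighted by $1/(1-\beta_k)$ through the first of the three identities above. I would carry out the conditional expectations in the order $\xi_{k+1/2}$ first (treating $\z_{k+1/2}$ as measurable) and then $\xi_k$ (treating $\z_k$ as measurable), and verify the coefficient match.
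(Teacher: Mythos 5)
Your overall architecture is indeed the paper's: replay the deterministic potential argument of Lemma~\ref{lem:potential} with $\rho_k=0$ and $\tF$ substituted into the iterate identities, add the Lipschitz slack with the same multiplier $\frac{b_k}{2L^2\alpha_k\beta_k(1-\beta_k)}$, and pass to expectations using mean-zero and independence. The genuine gap is in how you propose to dispose of the surviving linear noise terms, $\EE[\inprod{\F\z_{k+1}}{\tF\z_{k+1/2}-\F\z_{k+1/2}}]$ and $\EE[\inprod{\tF\z_k-\F\z_k}{\F\z_{k+1/2}}]$ (and, at $k=0$, $\EE[\inprod{\F\z_1}{\tF\z_0-\F\z_0}]$), which do not vanish because $\z_{k+1}$ depends on $\xi_{k+1/2}$ and $\z_{k+1/2}$ on $\xi_k$. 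You suggest splitting them by Young's inequality and absorbing the resulting $\|\F\z_{k+1}\|^2$ into ``the available Lipschitz slack.'' There is no such slack: after the deterministic bookkeeping the only nonnegative residual is $\frac{b_k(1-L^2\alpha_k^2)}{2L^2\alpha_k\beta_k(1-\beta_k)}\|\F\z_{k+1}-\F\z_{k+1/2}\|^2$, which is a squared \emph{difference}, not a multiple of $\|\F\z_{k+1}\|^2$, and it vanishes identically at $\alpha_k=\frac{1}{L}$ --- exactly the step size S-FEG uses. Any Young split therefore leaves an uncancelled $-c\,\|\F\z_{k+1}\|^2$ on the right-hand side, which cannot appear in the claimed bound.

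The device that actually closes this (the paper isolates it as Lemma~\ref{lem:bounded_exp}) is to subtract the \emph{noiseless counterfactual iterate} before applying Cauchy--Schwarz: writing $\bar{\z}_{k+1} := \z_k+\beta_k(\z_0-\z_k)-\alpha_k\F\z_{k+1/2}$, one has $\EE[\inprod{\F\bar{\z}_{k+1}}{\xi_{k+1/2}}]=0$ because $\bar{\z}_{k+1}$ is independent of $\xi_{k+1/2}$, while $\|\F\z_{k+1}-\F\bar{\z}_{k+1}\|\le L\|\z_{k+1}-\bar{\z}_{k+1}\| = L\alpha_k\|\xi_{k+1/2}\|$, so the term is bounded by $L\alpha_k\sigma_{k+1/2}^2$ with no leftover $\|\F\z_{k+1}\|^2$. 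The analogous subtraction at the half step gives $L(1-\beta_k)\alpha_k\sigma_k^2$; these two bounds, combined with the $-\sigma^2$ contributions from $\EE\big[\|\F\z\|^2-\|\tF\z\|^2\big]$, are exactly what produce the factor $1+2L\alpha_k$ and the asymmetric $(1-\beta_k)$ versus $(1-\beta_k)^{-1}$ weights you were trying to reverse-engineer. Your parenthetical ``Cauchy--Schwarz followed by Lipschitz'' is the right instinct, but it must be applied to $\F\z_{k+1}-\F\bar{\z}_{k+1}$, not routed through Young's inequality and the (empty) slack.
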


We first prove the following lemma
that is used in the proof of Lemma~\ref{lem:potential_stoc}.
\begin{lemma}\label{lem:bounded_exp}
	Let $\tF\z_{k/2} = \F\z_{k/2} + \xi_{k/2}$, where $\{\xi_{k/2}\}_{k\ge 0}$ are independent random variables satisfying $\EE[\xi_{k/2}]=0$ and $\EE[\|\xi_{k/2}\|^2]= \sigma_{k/2}^2$ for all $k\ge 0$.
	Then, for the $L$-Lipschitz continuous 
	and monotone operator $\F$, the sequence
	$\{\z_k\}_{k\ge 0}$ generated by \eqref{alg:stoc_general}
	satisfies
	\begin{align*}
	|\EE[\inprod{\F\z_1}{\tF\z_0-\F\z_0}]| \le L\alpha_0\sigma_0^2
	\end{align*}
	and, for all $k=0,1,\ldots$,
	\begin{align*}
	|\EE[\inprod{\F\z_{k+1/2}}{\tF\z_k-\F\z_k}]|&\le L(1-\beta_k)\alpha_k\sigma_k^2\\
	|\EE[\inprod{\F\z_{k+1}}{\tF\z_{k+1/2}-\F\z_{k+1/2}}]|&\le L\alpha_k\sigma_{k+1/2}^2.
	\end{align*}
\end{lemma}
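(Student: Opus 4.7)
The three bounds all have the same flavor: the stochastic noise $\xi_{k/2}$ at step $k/2$ appears in the update that defines $\z_{k+1/2}$ or $\z_{k+1}$, which in turn feeds into the $\F$ that it is paired with in the inner product. Since the noise and the $\F$-evaluation are not independent, a direct application of $\EE[\xi]=0$ fails. My plan is to introduce a noiseless surrogate iterate in which the offending $\xi_{k/2}$ is replaced by zero, and to split each inner product accordingly.

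Concretely, for the first estimate I would set $\tilde{\z}_1 := \z_0 - \alpha_0 \F\z_0$, which is deterministic (since $\z_0$ is) and hence independent of $\xi_0$. The telescoped form
\[
\EE[\inprod{\F\z_1}{\xi_0}] = \EE[\inprod{\F\z_1 - \F\tilde{\z}_1}{\xi_0}] + \inprod{\F\tilde{\z}_1}{\EE[\xi_0]}
\]
makes the second term vanish, and Cauchy–Schwarz combined with $L$-Lipschitz continuity gives
\[
|\EE[\inprod{\F\z_1}{\xi_0}]| \le L\,\EE[\|\z_1-\tilde{\z}_1\|\,\|\xi_0\|] = L\alpha_0\EE[\|\xi_0\|^2] = L\alpha_0\sigma_0^2,
\]
where I used $\z_1 - \tilde{\z}_1 = -\alpha_0\xi_0$ (since the $\beta_0=1$ update reduces to $\z_1 = \z_0 - \alpha_0\tF\z_0$).

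For the second estimate, I would condition on the $\sigma$-algebra $\mathcal{F}_{k-1/2}$ generated by all noise up to and including $\xi_{k-1/2}$, relative to which $\z_k$ and $\z_0$ are measurable and $\xi_k$ is zero-mean and independent. Define $\tilde{\z}_{k+1/2} := \z_k + \beta_k(\z_0-\z_k) - (1-\beta_k)\alpha_k\F\z_k$, which is $\mathcal{F}_{k-1/2}$-measurable. Then by the tower property $\EE[\inprod{\F\tilde{\z}_{k+1/2}}{\xi_k}]=0$, and
\[
|\EE[\inprod{\F\z_{k+1/2}}{\xi_k}]| \le L\,\EE[\|\z_{k+1/2}-\tilde{\z}_{k+1/2}\|\,\|\xi_k\|] = L(1-\beta_k)\alpha_k\,\EE[\|\xi_k\|^2],
\]
using $\z_{k+1/2}-\tilde{\z}_{k+1/2} = -(1-\beta_k)\alpha_k\xi_k$. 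The third estimate is analogous with $\mathcal{F}_k$ in place of $\mathcal{F}_{k-1/2}$, the surrogate $\tilde{\z}_{k+1}:=\z_k+\beta_k(\z_0-\z_k)-\alpha_k\F\z_{k+1/2}$ (which is $\mathcal{F}_k$-measurable because $\z_{k+1/2}$ depends only on $\xi_{\ell/2}$ for $\ell\le k$), and the residual $\z_{k+1}-\tilde{\z}_{k+1}=-\alpha_k\xi_{k+1/2}$.

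The only subtle point is bookkeeping of measurability: one must verify that $\z_{k+1/2}$ is measurable with respect to $\mathcal{F}_k$ so that $\F\tilde{\z}_{k+1}$ is independent of $\xi_{k+1/2}$, and symmetrically for the $\mathcal{F}_{k-1/2}$ case. This is routine from the explicit form of the recursion. Note that the monotonicity of $\F$ is not actually used here; only Lipschitz continuity and the independence and zero-mean structure of the noise enter the argument.
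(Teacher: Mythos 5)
Your proposal is correct and follows essentially the same route as the paper's proof: the paper likewise subtracts the noiseless surrogate iterate (e.g.\ $\F(\z_0-\alpha_0\F\z_0)$, whose inner product with $\xi_0$ has zero expectation by independence), and then applies Cauchy--Schwarz, the $L$-Lipschitz continuity of $\F$, and the identity that the residual equals $-\alpha_0\xi_0$ (resp.\ $-(1-\beta_k)\alpha_k\xi_k$ and $-\alpha_k\xi_{k+1/2}$). Your explicit measurability bookkeeping via the filtration, and your observation that monotonicity is not actually needed, are both accurate refinements of what the paper states more tersely.
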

\begin{proof}
    We have that
	\begin{align*}
	|\EE[\inprod{\F\z_1}{\tF\z_0-\F\z_0}]|
	&=|\EE[\inprod{\F\z_1-\F(\z_0-\alpha_0\F\z_0)}{\tF\z_0-\F\z_0}]|\\
	&\le\EE[\|\F\z_1-\F(\z_0-\alpha_0\F\z_0)\|\,\|\tF\z_0-\F\z_0\|]\\
	&\le\EE[L\|\z_1-(\z_0-\alpha_0\F\z_0)\|\,\|\tF\z_0-\F\z_0\|]\\
	&=\EE[L\alpha_0\|\tF\z_0-\F\z_0\|^2]\\
	&=L\alpha_0\sigma_0^2,
	\end{align*}
	where the first equality uses
	the assumption that $\xi_0 = \tF\z_0 - \F\z_0$
	is an independent random variable 
	with $\EE[\xi_0]=0$.
	Similarly, we have that
	\begin{align*}
	|\EE[\inprod{\F\z_{k+1/2}}{\tF\z_k-\F\z_k}]|
	&= |\EE[\inprod{\F\z_{k+1/2}-\F(\z_k+\beta_k(\z_0-\z_k)-(1-\beta_k)\alpha_k\F\z_k)}{\tF\z_k-\F\z_k}]|\\
	&\le\EE[\|\F\z_{k+1/2}-\F(\z_k+\beta_k(\z_0-\z_k)-(1-\beta_k)\alpha_k\F\z_k)\|\,\|\tF\z_k-\F\z_k\|]\\
	&\le\EE[L\|\z_{k+1/2}-(\z_k+\beta_k(\z_0-\z_k)-(1-\beta_k)\alpha_k\F\z_k)\|\,\|\tF\z_k-\F\z_k\|]\\
	&=\EE[L(1-\beta_k)\alpha_k\|\tF\z_k-\F\z_k\|^2]\\
	&=L(1-\beta_k)\alpha_k\sigma_k^2,
	\end{align*}
	and
	\begin{align*}
	&|\EE[\inprod{\F\z_{k+1}}{\tF\z_{k+1/2}-\F\z_{k+1/2}}]|\\
	&=|\EE[\inprod{\F\z_{k+1}-\F(\z_k+\beta_k(\z_0-\z_k)-\alpha_k\F\z_{k+1/2})}{\tF\z_{k+1/2}-\F\z_{k+1/2}}]|\\
	&\le\EE[\|\F\z_{k+1}-\F(\z_k+\beta_k(\z_0-\z_k)-\alpha_k\F\z_{k+1/2})\|\,\|\tF\z_{k+1/2}-\F\z_{k+1/2}\|]\\
	&\le\EE[L\|\z_{k+1}-(\z_k+\beta_k(\z_0-\z_k)-\alpha_k\F\z_{k+1/2})\|\,\|\tF\z_{k+1/2}-\F\z_{k+1/2}\|]\\
	&=\EE[L\alpha_k\|\tF\z_{k+1/2}-\F\z_{k+1/2}\|^2]\\
	&=L\alpha_k\sigma_{k+1/2}^2.
	\end{align*}
\end{proof}

\textit{Proof of Lemma~\ref{lem:potential_stoc}.}
First, for $k=0$, note that
	\begin{align}
	V_1 &= a_1\|\F\z_1\|^2 - b_1\inprod{\F\z_1}{\z_0-\z_1}\nonumber\\
	&=a_1\|\F\z_1\|^2 - \alpha_0b_1\inprod{\F\z_1}{\tF\z_0}\nonumber\\
	&=\frac{b_1(1-\beta_1)\alpha_1}{2\beta_1}\|\F\z_1\|^2 - \alpha_0\inprod{\F\z_1}{\tF\z_0}\nonumber\\
	&\le \frac{b_1\alpha_0}{2\beta_0}\|\F\z_1\|^2 - \alpha_0\inprod{\F\z_1}{\tF\z_0}\nonumber\\
	&= \frac{\alpha_0}{2}\|\F\z_1\|^2 - \alpha_0\inprod{\F\z_1}{\tF\z_0}.\label{eq:init_potential_gap_stoc_1}
	\end{align}
	By the given condition, we get
	\begin{align}\label{eq:init_potential_gap_stoc_2}
	0\le L_0^2\|\z_1-\z_0\|^2-\|\F\z_1-\F\z_0\|^2 = L_0^2\alpha_0^2\|\tF\z_0\|^2 - \|\F\z_1-\F\z_0\|^2.
	\end{align}
	The sum of \eqref{eq:init_potential_gap_stoc_1} and \eqref{eq:init_potential_gap_stoc_2} with multiplying factor $\frac{\alpha_0}{2}$ yields
	\begin{align*}
	V_1&\le \frac{\alpha_0}{2}\|\F\z_1\|^2 - \alpha_0\inprod{\F\z_1}{\tF\z_0} 
	+ \frac{\alpha_0}{2}(L_0^2\alpha_0^2\|\tF\z_0\|^2 - \|\F\z_1-\F\z_0\|^2)\\
	&= \frac{\alpha_0}{2}(L^2\alpha_0^2\|\tF\z_0\|^2-\|\F\z_0\|^2)-\alpha_0\inprod{\F\z_1}{\tF\z_0-\F\z_0}.
	\end{align*}
	Hence, we get
	\begin{align*}
	V_0-V_1\ge & \frac{\alpha_0(L^2\alpha_0^2-1)}{2}\|\F\z_0\|^2
	- \frac{\alpha_0}{2}(L^2\alpha_0^2\|\tF\z_0\|^2-\|\F\z_0\|^2)
	+ \alpha_0\inprod{\F\z_1}{\tF\z_0-\F\z_0}\\
	=& \frac{L^2\alpha_0^3}{2}(\|\F\z_0\|^2-\|\tF\z_0\|^2)
	+ \alpha_0\inprod{\F\z_1}{\tF\z_0-\F\z_0}.
	\end{align*}
	By taking expectation on the both sides,
	\begin{align*}
	\EE[V_0]-\EE[V_1] \ge& -\frac{L^2\alpha_0^3}{2}\sigma_0^2
	+ \alpha_0\EE[\inprod{\F\z_1}{\tF\z_0-\F\z_0}]\\
	\ge& - \Big(\frac{L^2\alpha_0^3}{2}+L\alpha_0^2\Big)\sigma_0^2
	\end{align*}
	where 
	the first inequality uses the fact
	$\EE[\|\F\z_0\|^2-\|\tF\z_0\|^2] 
	= -\EE[\|\F\z_0 - \tF\z_0\|^2]$,
	and
	the last inequality follows from Lemma~\ref{lem:bounded_exp}.
	Next, for $k\ge 1$,
	here we note the following relations for later use:
    \begin{align*}
    \z_{k+1}-\z_k &= \frac{\beta_k}{1-\beta_k}(\z_0-\z_{k+1}) - \frac{\alpha_k}{1-\beta_k}\tF\z_{k+1/2},\\
    \z_{k+1}-\z_k &= \beta_k(\z_0-\z_k) - \alpha_k\tF\z_{k+1/2}, \text{ and}\\
    \z_{k+1}-\z_{k+1/2}&= \alpha_k((1-\beta_k)\tF\z_k - \tF\z_{k+1/2}).
    \end{align*}
	Then, by the given condition, we have
	\begingroup
	\allowdisplaybreaks
	\begin{align}
	V_k - V_{k+1}\ge& V_k - V_{k+1} - \frac{b_k}{\beta_k}\inprod{\F\z_{k+1}-\F\z_k}{\z_{k+1}-\z_k}\nonumber\\
	=&V_k - V_{k+1} - \frac{b_k}{\beta_k}\inprod{\F\z_{k+1}}{\z_{k+1}-\z_k} + \frac{b_k}{\beta_k}\inprod{\F\z_k}{\z_{k+1}-\z_k}\nonumber\\
	=&(a_k\|\F\z_k\|^2-b_k\inprod{\F\z_k}{\z_0-\z_k})-(a_{k+1}\|\F\z_{k+1}\|^2-b_{k+1}\inprod{\F\z_{k+1}}{\z_0-\z_{k+1}})\nonumber\\
	&-\frac{b_k}{\beta_k}\Inprod{\F\z_{k+1}}{\frac{\beta_k}{1-\beta_k}(\z_0-\z_{k+1})-\frac{\alpha_k}{1-\beta_k}\tF\z_{k+1/2}}\nonumber\\
	&+\frac{b_k}{\beta_k}\inprod{\F\z_k}{\beta_k(\z_0-\z_k)-\alpha_k\tF\z_{k+1/2}}\nonumber\\
	=&a_k\|\F\z_k\|^2 - a_{k+1}\|\F\z_{k+1}\|^2\nonumber\\
	&+\Big(b_{k+1}-\frac{b_k}{1-\beta_k}\Big)\inprod{\F\z_{k+1}}{\z_0-\z_{k+1}}+\frac{b_k\alpha_k}{\beta_k(1-\beta_k)}\inprod{\F\z_{k+1}}{\tF\z_{k+1/2}}\nonumber\\
	&-\frac{\alpha_k b_k}{\beta_k}\inprod{\F\z_k}{\tF\z_{k+1/2}}\nonumber\\
	=&a_k\|\F\z_k\|^2 - a_{k+1}\|\F\z_{k+1}\|^2\nonumber\\
	&+\frac{b_k\alpha_k}{\beta_k(1-\beta_k)}\inprod{\F\z_{k+1}}{\tF\z_{k+1/2}}-\frac{\alpha_k b_k}{\beta_k}\inprod{\F\z_k}{\tF\z_{k+1/2}}. \qquad(\because b_{k+1}=\frac{b_k}{1-\beta_k}.)\label{eq:potential_gap_stoc_1}
	\end{align}
	\endgroup
	By the given condition, we get
	\begin{align}\label{eq:potential_gap_stoc_2}
	0&\ge \|\F\z_{k+1}-\F\z_{k+1/2}\|^2 - L_k^2\|\z_{k+1}-\z_{k+1/2}\|^2 \nonumber\\
	&=\|\F\z_{k+1}-\F\z_{k+1/2}\|^2 - L_k^2\alpha_k^2\|(1-\beta_k)\tF\z_k-\tF\z_{k+1/2}\|^2.
	\end{align}
	Hence, the sum of \eqref{eq:potential_gap_stoc_1} and \eqref{eq:potential_gap_stoc_2} with multiplying factor $\frac{b_k}{2L_k^2\alpha_k\beta_k(1-\beta_k)}$ yields
	\begin{align*}
	V_k-V_{k+1}\ge& a_k\|\F\z_k\|^2 - a_{k+1}\|\F\z_{k+1}\|^2\\
	&+\frac{b_k\alpha_k}{\beta_k(1-\beta_k)}\inprod{\F\z_{k+1}}{\tF\z_{k+1/2}}-\frac{\alpha_k b_k}{\beta_k}\inprod{\F\z_k}{\tF\z_{k+1/2}}\\
	&+\frac{b_k}{2L^2\alpha_k\beta_k(1-\beta_k)}(\|\F\z_{k+1}-\F\z_{k+1/2}\|^2-L_k^2\alpha_k^2\|(1-\beta_k)\tF\z_k-\tF\z_{k+1/2}\|^2)\\
	=&\Big(a_k\|\F\z_k\|^2 - \frac{b_k(1-\beta_k)\alpha_k}{2\beta_k}\|\tF\z_k\|^2\Big)
	+ \Big(\frac{b_k}{2L^2\alpha_k\beta_k(1-\beta_k)}-a_{k+1}\Big)\|\F\z_{k+1}\|^2\\
	&+ \frac{b_k}{2L^2\alpha_k\beta_k(1-\beta_k)}\|\F\z_{k+1/2}\|^2
	- \frac{b_k\alpha_k}{2\beta_k(1-\beta_k)}\|\tF\z_{k+1/2}\|^2\\
	&-\frac{b_k}{L^2\alpha_k\beta_k(1-\beta_k)}\inprod{\F\z_{k+1}}{\F\z_{k+1/2}}
	+ \frac{b_k\alpha_k}{\beta_k(1-\beta_k)}\inprod{\F\z_{k+1}}{\tF\z_{k+1/2}}\\
	&+\frac{b_k\alpha_k}{\beta_k}\inprod{\tF\z_k-\F\z_k}{\tF\z_{k+1/2}}
	\end{align*}
	By the given conditions, we get $a_k = \frac{b_k(1-\beta_k)\alpha_k}{2\beta_k}$ and
	\begin{align*}
	a_{k+1} = \frac{b_{k+1}(1-\beta_{k+1})\alpha_{k+1}}{2\beta_{k+1}}
	\le\frac{b_{k+1}\alpha_k}{2\beta_k}
	=\frac{b_k\alpha_k}{2\beta_k(1-\beta_k)}. \qquad(\because b_{k+1}=\frac{b_k}{1-\beta_k}.)
	\end{align*}
	Therefore, we get
	\begingroup
	\allowdisplaybreaks
	\begin{align*}
	V_k-V_{k+1}\ge& \frac{b_k(1-\beta_k)\alpha_k}{2\beta_k}(\|\F\z_k\|^2-\|\tF\z_k\|^2)
	+ \frac{b_k(1-L^2\alpha_k^2)}{2L^2\alpha_k\beta_k(1-\beta_k)}\|\F\z_{k+1}\|^2\\
	&+ \frac{b_k}{2L^2\alpha_k\beta_k(1-\beta_k)}\|\F\z_{k+1/2}\|^2
	- \frac{b_k\alpha_k}{2\beta_k(1-\beta_k)}\|\tF\z_{k+1/2}\|^2\\
	&-\frac{b_k}{L^2\alpha_k\beta_k(1-\beta_k)}\inprod{\F\z_{k+1}}{\F\z_{k+1/2}}
	+ \frac{b_k\alpha_k}{\beta_k(1-\beta_k)}\inprod{\F\z_{k+1}}{\tF\z_{k+1/2}}\\
	&+\frac{b_k\alpha_k}{\beta_k}\inprod{\tF\z_k-\F\z_k}{\tF\z_{k+1/2}}\\
	=& \frac{b_k(1-\beta_k)\alpha_k}{2\beta_k}(\|\F\z_k\|^2-\|\tF\z_k\|^2)
	+ \frac{b_k(1-L^2\alpha_k^2)}{2L^2\alpha_k\beta_k(1-\beta_k)}\|\F\z_{k+1}\|^2\\
	&+ \frac{b_k(1-L^2\alpha_k^2)}{2L^2\alpha_k\beta_k(1-\beta_k)}\|\F\z_{k+1/2}\|^2
	+ \frac{b_k\alpha_k}{2\beta_k(1-\beta_k)}(\|\F\z_{k+1/2}\|^2-\|\tF\z_{k+1/2}\|^2)\\
	&-\frac{b_k(1-L^2\alpha_k^2)}{L^2\alpha_k\beta_k(1-\beta_k)}\inprod{\F\z_{k+1}}{\F\z_{k+1/2}}
	+ \frac{b_k\alpha_k}{\beta_k(1-\beta_k)}\inprod{\F\z_{k+1}}{\tF\z_{k+1/2}-\F\z_{k+1/2}}\\
	&+\frac{b_k\alpha_k}{\beta_k}\inprod{\tF\z_k-\F\z_k}{\tF\z_{k+1/2}}\\
	=& \frac{b_k(1-\beta_k)\alpha_k}{2\beta_k}(\|\F\z_k\|^2-\|\tF\z_k\|^2)
	+ \frac{b_k\alpha_k}{2\beta_k(1-\beta_k)}(\|\F\z_{k+1/2}\|^2-\|\tF\z_{k+1/2}\|^2)
	\\
	&+ \frac{b_k(1-L^2\alpha_k^2)}{2L^2\alpha_k\beta_k(1-\beta_k)}\|\F\z_{k+1}-\F\z_{k+1/2}\|^2\\
	&+\frac{b_k\alpha_k}{\beta_k(1-\beta_k)}\inprod{\F\z_{k+1}}{\tF\z_{k+1/2}-\F\z_{k+1/2}}
	+\frac{b_k\alpha_k}{\beta_k}\inprod{\tF\z_k-\F\z_k}{\tF\z_{k+1/2}}\\
	\ge& \frac{b_k(1-\beta_k)\alpha_k}{2\beta_k}(\|\F\z_k\|^2-\|\tF\z_k\|^2)
	+ \frac{b_k\alpha_k}{2\beta_k(1-\beta_k)}(\|\F\z_{k+1/2}\|^2-\|\tF\z_{k+1/2}\|^2)
	\\
	&+\frac{b_k\alpha_k}{\beta_k(1-\beta_k)}\inprod{\F\z_{k+1}}{\tF\z_{k+1/2}-\F\z_{k+1/2}}
	+\frac{b_k\alpha_k}{\beta_k}\inprod{\tF\z_k-\F\z_k}{\tF\z_{k+1/2}}.
	\end{align*}
	\endgroup
	Then by taking expectation on the both sides and using the fact $\EE[\inprod{\tF\z_k-\F\z_k}{\tF\z_{k+1/2}}] = \EE[\inprod{\tF\z_k-\F\z_k}{\F\z_{k+1/2}}]$, we get
	\begin{align*}
	\EE[V_k]-\EE[V_{k+1}] \ge & -\frac{b_k(1-\beta_k)\alpha_k}{2\beta_k}\sigma_k^2
	-\frac{b_k\alpha_k}{2\beta_k(1-\beta_k)}\sigma_{k+1/2}^2\\
	&+\frac{b_k\alpha_k}{\beta_k(1-\beta_k)}\EE[\inprod{\F\z_{k+1}}{\tF\z_{k+1/2}-\F\z_{k+1/2}}]
	+\frac{b_k\alpha_k}{\beta_k}\EE[\inprod{\tF\z_k-\F\z_k}{\F\z_{k+1/2}}]\\
	\ge&-\frac{b_k\alpha_k(1+2L\alpha_k)}{2\beta_k}\Big((1-\beta_k)\sigma_k^2 + \frac{1}{1-\beta_k}\sigma_{k+1/2}^2\Big)
	\end{align*}
	where the last inequality follows from Lemma~\ref{lem:bounded_exp}.
\qed

\subsubsection{Proof of Theorem~\ref{thm:convergence_stoc}}\label{appx:s_feg_thm}
Note that S-FEG is equivalent to \eqref{alg:stoc_general} with $\alpha_k = \frac{1}{L}$ and $\beta_k = \frac{1}{k+1}$.
It is straightforward to verify that the given $\{\alpha_k\}_{k\ge 0}$ and $\{\beta_k\}_{k\ge 0}$ satisfy the conditions in Lemma~\ref{lem:potential_stoc} for all $k\ge 0$. By noting that
\begin{gather*}
a_k = \frac{b_k(1-\beta_k)\alpha_k}{2\beta_k} = \frac{k^2}{2L} \qquad\text{and}\\
b_k = \frac{1}{1-\beta_{k-1}}b_{k-1} = \Big(\prod_{i=1}^{k-1}\frac{1}{1-\beta_i}\Big)b_1 = k,
\end{gather*}
Lemma~\ref{lem:potential_stoc} implies that
\begin{align*}
\EE[V_k]&=\EE[V_k-V_0]\\
&=\sum_{l=0}^{k-1}(\EE[V_{l+1}]-\EE[V_l])\\
&\le \Big(\frac{L^2\alpha_0^3}{2}+L\alpha_0^2\Big)\sigma_0^2 + \sum_{l=1}^{k-1} \frac{b_l\alpha_l(1+2L\alpha_l)}{2\beta_l}\Big((1-\beta_l)\sigma_l^2 + \frac{1}{1-\beta_l}\sigma_{l+1/2}^2\Big)\\
&= \frac{3}{2L}\sigma_0^2 + \sum_{l=1}^{k-1}\frac{3}{2L}(l^2\sigma_l^2 + (l+1)^2\sigma_{l+1/2}^2)\\
&\overset{\text{let}}{=}\sigma_{\text{total}}^2.
\end{align*}
Therefore, noting that $\EE[V_k] = \EE\Big[\frac{k^2}{2L}\|\F\z_k\|^2 - k\inprod{\F\z_k}{\z_0-\z_k}\Big]$, we get
\begin{align*}
\EE\Big[\frac{k^2}{2L}\|\F\z_k\|^2\Big]&\le \EE[k\inprod{\F\z_k}{\z_0-\z_k}] + \sigma_{\text{total}}^2\\
&= \EE[k\inprod{\F\z_k}{\z_0-\z_*} + k\inprod{\F\z_k}{\z_*-\z_k}] + \sigma_{\text{total}}^2\\
&= \EE[k\inprod{\F\z_k}{\z_0-\z_*}] + \sigma_{\text{total}}^2 \qquad(\because \text{monotonicity of $\F$})\\
&\le \EE\Big[\frac{k^2}{4L}\|\F\z_k\|^2 + L\|\z_0-\z_*\|^2\Big] + \sigma_{\text{total}}^2. \qquad(\because \text{Young's inequality}.)
\end{align*}
As a result, we get $\EE\Big[\frac{k^2}{4L}\|\F\z_k\|^2\Big]\le L\|\z_0-\z_*\|^2 + \frac{3}{2L}\left[\sigma_0^2 + \sum_{l=1}^{k-1}(l^2\sigma_l^2 + (l+1)^2\sigma_{l+1/2}^2)\right]$ and, by dividing the both sides by $\frac{k^2}{4L}$, we get
\begin{align*}
    \EE[\|\F\z_k\|^2]\le \frac{4L^2\|\z_0-\z_*\|^2}{k^2} + \frac{6}{k^2}\left[\sigma_0^2 + \sum_{l=1}^{k-1}(l^2\sigma_l^2 + (l+1)^2\sigma_{l+1/2}^2)\right].
\end{align*}
In addition, if 
$\sigma_0^2 \le \frac{\epsilon}{6}$,
$\sigma_k^2\le \frac{\epsilon}{6k}$ and $\sigma_{k+1/2}^2\le \frac{\epsilon}{6(k+1)}$ for all $k\ge 1$, then we have
\begin{align*}
    \EE[\|\F\z_k\|^2] &\le \frac{4L^2\|\z_0-\z_*\|^2}{k^2} + \frac{6}{k^2}\left[\frac{\epsilon}{6} + \sum_{l=1}^{k-1}\Big(\frac{\epsilon l}{6} + \frac{\epsilon(l+1)}{6}\Big)\right]\\
    &= \frac{4L^2\|\z_0-\z_*\|^2}{k^2} + \frac{\epsilon}{k^2}\sum_{l=0}^{k-1}(2l+1)\\
    &= \frac{4L^2\|\z_0-\z_*\|^2}{k^2} + \frac{\epsilon}{k^2}\sum_{l=0}^{k-1}((l+1)^2-l^2)\\
    &= \frac{4L^2\|\z_0-\z_*\|^2}{k^2} + \epsilon.
\end{align*}
\qed

\comment{
\subsubsection{Proof of Theorem~\ref{thm:convergence_stoc}}
Recall that by taking $\beta_k = \frac{1}{k+1}$ and $\alpha_k = \frac{1}{L}$ in \eqref{alg:stoc_general}, we get FEG.
It is straightforward to verify that the given $\{\alpha_k\}_{k\ge 0}$ and $\{\beta_k\}_{k\ge 0}$ satisfy the conditions in Lemma~\ref{lem:potential_stoc} for all $k\ge 0$. By noting that
\begin{gather*}
b_k = \frac{1}{1-\beta_{k-1}}b_{k-1} = \Big(\prod_{i=1}^{k-1}\frac{1}{1-\beta_i}\Big)b_1 = k \qquad\text{and}\\
a_k = \frac{b_k(1-\beta_k)\alpha_k}{2\beta_k} = \frac{k^2}{2L},
\end{gather*}
Lemma~\ref{lem:potential_stoc} implies that
\begin{align*}
\EE[V_k]&=\EE[V_k-V_0]\\
&\le \Big(\frac{L^2\alpha_0^3}{2}+L\alpha_0^2\Big)\sigma_0^2 + \sum_{l=1}^{k-1} \frac{b_l\alpha_l(1+2L\alpha_l)}{2\beta_l}\Big((1-\beta_l)\sigma_l^2 + \frac{1}{1-\beta_l}\sigma_{l+1/2}^2\Big)\\
&= \frac{3}{2L}\sigma_0^2 + \sum_{l=1}^{k-1}\frac{3}{2L}(l^2\sigma_l^2 + (l+1)^2\sigma_{l+1/2}^2)\\
&\le \frac{\epsilon}{4L} + \sum_{l=1}^{k-1}\frac{\epsilon}{4L}(2l+1)\\
&= \frac{k^2\epsilon}{4L}.
\end{align*}
Therefore, noting that $\EE[V_k] = \EE\Big[\frac{k^2}{2L}\|\F\z_k\|^2 - k\inprod{\F\z_k}{\z_0-\z_k}\Big]$,
\begin{align*}
\EE\Big[\frac{k^2}{2L}\|\F\z_k\|^2\Big]&\le \EE[k\inprod{\F\z_k}{\z_0-\z_k}] + \frac{k^2\epsilon}{4L}\\
&= \EE[k\inprod{\F\z_k}{\z_0-\z_*} + k\inprod{\F\z_k}{\z_*-\z_k}] + \frac{k^2\epsilon}{4L}\\
&= \EE[k\inprod{\F\z_k}{\z_0-\z_*}] + \frac{k^2\epsilon}{4L} \qquad(\because \text{monotonicity of $\F$})\\
&\le \EE\Big[\frac{k^2}{4L}\|\F\z_k\|^2 + L\|\z_0-\z_*\|^2\Big] + \frac{k^2\epsilon}{4L} \qquad(\because \text{Young's inequality})
\end{align*}
As a result, we get $\EE\Big[\frac{k^2}{4L}\|\F\z_k\|^2\Big]\le L\|\z_0-\z_*\|^2 + \frac{k^2\epsilon}{4L}$ and
the desired result follows directly by dividing both sides by $\frac{k^2}{4L}$.
\qed
}
\end{document}